\newtheorem{proposition}{Proposition}[section]
\newtheorem{lemma}[proposition]{Lemma}
\newtheorem{corollary}[proposition]{Corollary}
\newtheorem{theorem}[proposition]{Theorem}
\theoremstyle{definition}
\newtheorem{definition}[proposition]{Definition}
\newcommand{\selabel}[1]{\label{se:#1}}
\def\<{\leqslant}
\def\>{\geqslant}
\date{}
\begin{document}

\title{THE RIBBON ELEMENTS OF DRINFELD DOUBLE OF RADFORD HOPF ALGEBRA}

\author{Hua Sun}
\address{Hua Sun \\ College of Mathematical Science \\ Yangzhou University \\ Yangzhou 225002, China}
\email{huasun@yzu.edu.cn}
\author{Yuyan Zhang}
\address{Yuyan Zhang \\ College of Mathematical Science \\ Yangzhou University \\ Yangzhou 225002, China}
\email{2779082883@qq.com}
\author{Libin Li}
\address{Libin Li \\ College of Mathematical Science \\ Yangzhou University \\
	Yangzhou 225002, China}
\email{lbli@yzu.edu.cn}
\subjclass[2020]{16T05}
\keywords{Drinfeld double, ribbon Hopf algebra, quasi-triangular Hopf algebra}
\thanks{This work was financially supported by NNSF of China (Nos. 12371041, 12201545) and Natural Science Foundation of the Jiangsu Higher Education Institutions of China(No. 22KJD110006)}
\begin{abstract}
Let $m$, $n$ be two positive integers,  $\Bbbk$ be an algebraically closed field with char($\Bbbk)\nmid mn$. {Radford constructed an $mn^{2}$-dimensional Hopf algebra} {$R_{mn}(q)$ such that its Jacobson radical is not a Hopf ideal.} We show that the Drinfeld double $D(R_{mn}(q))$ of Radford Hopf algebra $R_{mn}(q)$ has ribbon elements if and only if $n$ is odd. Moreover, if $m$ is even and $n$ is odd, then $D(R_{mn}(q))$ {has} two ribbon elements,  if both $m$ and $n$ are odd, then $D(R_{mn}(q))$ has only one ribbon element. Finally, we compute explicitly all ribbon elements of $D(R_{mn}(q))$.
\end{abstract}
\maketitle

\section{\bf Introduction}\selabel{1}
The representation category of a Hopf algebra is a tensor category. Moreover, the representation category of a quasi-triangular Hopf algebra is a braided tensor category. The braiding structure of a quasi-triangular Hopf algebra can supply solution to the Yang-Baxter equation. {Many results about quasi-triangular Hopf algebra} {can be found in \cite{CQ,WD}.} Drinfeld \cite{VG} gave a general method, by which one can construct a quasi-triangular Hopf algebra from a finite dimensional Hopf algebra, i.e. the quantum double(or Drinfeld double) of a Hopf algebra. {A quasi-triangular Hopf} {algebra which has a ribbon element is called ribbon Hopf algebra.} {Finite dimensional} {ribbon Hopf algebras play} an important role in constructing invariants of 3-manifolds \cite{Re}. Thus, researchers {have} paid much attention to the question that when a quasi-triangular Hopf algebra has ribbon structures. Kauffman and Radford \cite{Kau} gave a necessary and sufficient condition for the Drinfeld double of a finite dimensional Hopf algebra {to} admit {a} ribbon structure, they proved that $(D(A_n(q)),\mathcal{R})$ is a ribbon Hopf algebra if and only if $n$ is odd, where $D(A_n(q))$ is the Drinfeld double of $n^2$-dimensional Taft algebra $A_n(q)$, $\mathcal{R}$ is the universal $\mathcal{R}$-matrix of $D(A_n(q))$. In \cite{2022}, Benkart and Biswal computed the ribbon element of $(D(A_n(q)),\mathcal{R})$ explicitly when $n$ is odd. In \cite{YSL}, Chen and Yang gave a necessary and sufficient condition for the Drinfeld double of a finite dimensional Hopf superalgebra to have a ribbon element. Burciu \cite{Bur} gave a sufficient condition for the Drinfeld double $(D(u(\mathcal{D},0,0)),\mathcal{R})$ to be a ribbon Hopf algebra, where $u(\mathcal{D},0,0)$ was {constructed} by Andruskiewitsch and Schneider in \cite{Andru}. {Leduc and Ram\cite{Led} showed how the ribbon Hopf algebra} {structure on the Drinfeld-Jimbo quantum groups of types $A$, $B$, $C$, and $D$ can be} {used to derive formulas giving explicit realizations of the irreducible representations} {of the Iwahori-Hecke algebras of type $A$ and the Birman-Wenzl algebras. 

{Radford\cite{DE} constructed an $mn^{2}$-dimensional Hopf algebra such that its Jacobson } {radical is not a Hopf ideal. The Hopf algebra is denoted by $R_{mn}(q)$ and called a } {Radford Hopf algebra here.} In this paper, we give a sufficient and necessary condition for the Drinfeld double $D(R_{mn}(q))$ of $R_{mn}(q)$ to have ribbon structure. The paper is organized as follows. In Section 2, we recall some definitions, {notions} and the structures of Radford Hopf algebra $R_{mn}(q)$. In Section 3, we describe the Hopf algebra structure of  $(R_{mn}(q))^{*}$. In Section 4, we show that $(D(R_{mn}(q)),\mathcal{R})$ {has} ribbon elements if and only if $n$ is odd. Moreover, we compute all ribbon elements of $(D(R_{mn}(q)),\mathcal{R})$.

\numberwithin{equation}{section}
\section{\bf Preliminaries}\selabel{2}

Throughout, we work over an algebraically closed field $\Bbbk$ with char($\Bbbk)\nmid mn$. Unless
otherwise stated, all algebras and Hopf algebras are
defined over $\Bbbk$;
$dim$ and $\otimes$ denote $dim_{\Bbbk}$ and $\otimes_\Bbbk$ respectively. The references \cite{Ka, Mon, Sw} are basic references for the theory of Hopf algebras and quantum groups.

Let $0\neq q\in \Bbbk.$ For any nonnegative integer $n$, define $(n)_{q}$ by $(0)_{q}=0$ and $(n)_{q}=1+q+...+q^{n-1}$ for $n>0$. Observe that $(n)_{q}=n$ when $q=1$, and
$$(n)_{q}=\frac{q^{n}-1}{q-1}$$
when $q\neq1$. Define the $q$-factorial of $n$ by
$$(n)!_{q}=\frac{(q^{n}-1)(q^{n-1}-1)...(q-1)}{(q-1)^{n}}$$
when $n>0$ and $q\neq1$. The $q$-binomial coefficients $\left(
\begin{array}{c}
n\\
i
\end{array}
\right)_{q}$ are defined inductively as follows for $0\leq i\leq n$:
$$\left(
\begin{array}{c}
n\\
0
\end{array}
\right)_{q}=1=\left(
\begin{array}{c}
n\\
n
\end{array}
\right)_{q},
\ for\ n\geq0,$$
$$\left(
\begin{array}{c}
n\\
i
\end{array}
\right)_{q}=q^{i}\left(
\begin{array}{c}
n-1\\
i
\end{array}
\right)_{q}+\left(
\begin{array}{c}
n-1\\
i-1
\end{array}
\right)_{q},\ for\ 0<i<n. $$
It is well-known that
$\left(\begin{array}{c}
n\\
i
\end{array}
\right)_{q}$ is a polynomial in $q$ with integer coefficients and with value at $q=1$ equal to the usual binomial coefficient $\left(\begin{array}{c}
n\\
i
\end{array}
\right)$ , and that $$\left(\begin{array}{c}
n\\
i
\end{array}
\right)_{q}=\frac{(n)!_{q}}{(i)!_{q}(n-i)!_{q}}$$
when $(n-1)!_{q}\neq0$ and $0<i<n$.

In the next, we use the {Sweedler's} notation: for $x\in H$, $H$ being a coalgebra,
$$\Delta(x)=\sum x_{1}\otimes x_{2}.$$
{For $h\in H$ and $\alpha$ in the dual space $H^{*}$, we define}
$$\langle\alpha,h\rangle :=\alpha(h)\in \Bbbk.$$
Suppose that $H$ is a bialgebra over $\Bbbk$. The left and right $H$-module actions defined on $H^{*}$ by
$$<a\rightharpoonup p,b>=<p,ba>=<p\leftharpoonup b,a>$$
respectively for $a,b\in H$ and $p\in H^{*}$ give $H^{*}$ an {$H$}-bimodule structure. Likewise the left and a right $H^{*}$-{module} actions on $H$ by
$$p\rightharpoonup a=\sum a_{1}<p,a_{2}>,\ a\leftharpoonup p=\sum<p,a_{1}>a_{2}$$
respectively for $p\in H^{*}$ and $a\in H$ give $H$ a $H^{*}$-bimodule structure.
\subsection{Radford Hopf algebra}\selabel{2.1}
~~

In this subsection, we recall the structure of Radford Hopf algebra $R_{mn}(q)$.

Let $m\geq2$, $n\geq1$, and let $q\in \Bbbk$ be a primitive $n$-th root of unity. The Radford Hopf algebra $R_{mn}(q)$ is generated as an algebra by $g$ and $x$ subject to the following relations:
$$g^{mn}=1,\ x^{n}=g^{n}-1,\ xg=qgx.$$
The comultiplication $\Delta$, counit $\varepsilon$ and antipode $S$ are given respectively by
$$\Delta(x)=x\otimes g+1\otimes x,\ \varepsilon(x)=0,\ S(x)=-xg^{-1},$$
$$\Delta(g)=g\otimes g,\ \varepsilon(g)=1, \ S(g)=g^{-1}=g^{mn-1}.$$
Note that dim$(R_{mn}(q))=mn^{2}$, and $R_{mn}(q)$ has a $\Bbbk$-basis $\{g^{i}x^{j}|0\leq i\leq mn-1,0\leq j\leq n-1\}$. For details, one can refer to \cite{DE}.
\subsection{Ribbon Hopf algebra}\selabel{2.2}
~~

In this subsection, we recall the definition of quasi-triangular Hopf algebra and ribbon Hopf algebra.
\begin{definition}\label{3.1} Let $H$ be a Hopf algebra. If there exists an invertible element $\mathcal{R}\in H\otimes H$, such that
\begin{subequations}
\begin{align*}
&\mathcal{R}\Delta(x)=\Delta^{op}(x)\mathcal{R},\ for\ all\ x\in H,\\
&(\Delta\otimes id)\mathcal{R}=\mathcal{R}_{13}\mathcal{R}_{23},\\
&(id\otimes\Delta)\mathcal{R}=\mathcal{R}_{13}\mathcal{R}_{12},
\end{align*}
\end{subequations}
then $H$ is called a quasi-triangular Hopf algebra. Here $\Delta^{op}(x)$ has the tensor factors in $\Delta(x)$ interchanged, and $\mathcal{R}=\sum_{i}x_{i}\otimes y_{i},\mathcal{R}_{12}=\sum_{i}x_{i}\otimes y_{i}\otimes1,\mathcal{R}_{13}=\sum_{i}x_{i}\otimes1\otimes y_{i},\mathcal{R}_{23}=\sum_{i}1\otimes x_{i}\otimes y_{i}.$ Let $\mathcal{R}^{op}=\sum_{i}y_{i}\otimes x_{i}.$
\end{definition}
We assume $\mathcal{R}=\sum_{i}x_{i}\otimes y_{i}$ as above and use the antipode $S$ to define
\begin{equation}\label{2}
u=\sum_{i}S(y_{i})x_{i}\in H.
\end{equation}
Then the following expressions hold
$$uxu^{-1}=S^{2}(x) \ for \ all\ x\in H \ and \ \Delta(u)=(\mathcal{R}^{op}\mathcal{R})^{-1}(u\otimes u).$$
\begin{definition}\label{2.1}
Let $H$ be a quasi-triangular Hopf algebra. If there exists an invertible element $v$(the ribbon element) in the center of $H$ such that
\begin{equation}\label{2.2}
v^{2}=uS(u),\ S(v)=v,\ \varepsilon(v)=1,\ \Delta(v)=(\mathcal{R}^{op}\mathcal{R})^{-1}(v\otimes v),
\end{equation}
where $u$ is as in (\ref{2}), then $(H,\mathcal{R},v)$ is called a ribbon Hopf algebra.
\end{definition}

\section{\bf The structure of $(R_{mn}(q))^{*}$}\selabel{3}
~~
In this section, we describe the Hopf algebra structure of $(R_{mn}(q))^{*}$.

Let $\{\overline{g^{i}x^{j}}|0\leq i\leq mn-1,0\leq j\leq n-1\}$be the basis of Hopf algebra $(R_{mn}(q))^{*}$ such that $\overline{g^{i}x^{j}}(g^{i}x^{j})=1$ and $\overline{g^{i}x^{j}}(g^{i^{'}}x^{j^{'}})=0$ for $(i^{'},j^{'})\neq (i,j),\ 0\leq i,i^{'}\leq mn-1,\ 0\leq j,j^{'}\leq n-1.$
\begin{lemma}\label{L3.1} Let $0\leq i,k\leq mn-1$ and $0\leq j,l\leq n-1$. Then
$$\overline{g^{i}x^{j}}\ast\overline{g^{k}x^{l}}=
\begin{cases}
\  \ \ \   \   0,& \text{if $k\neq i+j$(mod $mn$) or $l+j\geq n,$}\\
\left(
\begin{array}{c}
l+j\\
j
\end{array}
\right)_{q}\overline{g^{i}x^{j+l}},& \text{otherwise.}
\end{cases}$$
\end{lemma}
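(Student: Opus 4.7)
My plan is to unwind the convolution formula: for any $h\in R_{mn}(q)$,
$$(\overline{g^{i}x^{j}}\ast\overline{g^{k}x^{l}})(h)=\sum \overline{g^{i}x^{j}}(h_{1})\,\overline{g^{k}x^{l}}(h_{2}),$$
and to evaluate this at every basis vector $h=g^{a}x^{b}$. The entire computation therefore reduces to an explicit formula for $\Delta(g^{a}x^{b})$.

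The first step is to compute $\Delta(x^{b})$. I would record the $q$-commutation identity $(1\otimes x)(x\otimes g)=q(x\otimes g)(1\otimes x)$ in $R_{mn}(q)\otimes R_{mn}(q)$, which is immediate from $xg=qgx$. Since $\Delta(x)=x\otimes g+1\otimes x$ and its two summands satisfy this relation, the $q$-binomial theorem (whose recursion agrees with the one defining $\left(\begin{array}{c}b\\s\end{array}\right)_{q}$ recalled in \seref{2}) gives
$$\Delta(x^{b})=\sum_{s=0}^{b}\left(\begin{array}{c}b\\s\end{array}\right)_{q}x^{s}\otimes g^{s}x^{b-s},$$
and combining this with $\Delta(g^{a})=g^{a}\otimes g^{a}$ yields
$$\Delta(g^{a}x^{b})=\sum_{s=0}^{b}\left(\begin{array}{c}b\\s\end{array}\right)_{q}g^{a}x^{s}\otimes g^{a+s}x^{b-s}.$$

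The second step is to substitute this expansion into the convolution formula. By the defining property of the dual basis, $\overline{g^{i}x^{j}}$ is nonzero on $g^{a}x^{s}$ precisely when $a\equiv i\pmod{mn}$ and $s=j$, in which case the value is $1$. Hence at most one term in the sum over $s$ can survive the evaluation; the surviving term demands simultaneously $s=j$, $a\equiv i\pmod{mn}$, $b-s=l$ and $a+s\equiv k\pmod{mn}$, which together force $b=l+j$ and $k\equiv i+j\pmod{mn}$. If $l+j\ge n$ no admissible basis vector of the required degree exists in the range $0\le b\le n-1$, and if $k\not\equiv i+j\pmod{mn}$ the congruence on $a$ cannot be met; in either case the product vanishes on every basis element, hence is zero. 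Otherwise the unique surviving evaluation occurs at $h=g^{i}x^{l+j}$ with value $\left(\begin{array}{c}l+j\\j\end{array}\right)_{q}$, so the product equals $\left(\begin{array}{c}l+j\\j\end{array}\right)_{q}\overline{g^{i}x^{l+j}}$, as claimed.

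The main obstacle is step one: one must establish the $q$-binomial expansion of $\Delta(x)^{b}$ with the correct $q$-power on the cross term. Specifically, it is the orientation of $(1\otimes x)(x\otimes g)=q(x\otimes g)(1\otimes x)$, with $q$ appearing on the right, that makes the coefficient come out exactly as $\left(\begin{array}{c}b\\s\end{array}\right)_{q}$ for the chosen placement of tensor factors; reversing the orientation would produce an expansion in $q^{-1}$ and, after normalizing the second tensor slot into the form $g^{s}x^{b-s}$, insert spurious $q$-powers in each summand and spoil the clean formula. Once $\Delta(g^{a}x^{b})$ is in hand, the remaining argument is essentially bookkeeping: matching the supports of the two dual functionals picks out a single nonvanishing term and reads off its coefficient.
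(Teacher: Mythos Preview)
Your proposal is correct and matches the paper's approach: the paper simply asserts that the lemma ``follows from a straightforward verification'' without giving details, and your argument is exactly that verification spelled out --- expanding $\Delta(g^{a}x^{b})$ via the $q$-binomial theorem (with the orientation $(1\otimes x)(x\otimes g)=q(x\otimes g)(1\otimes x)$ checked correctly) and then matching supports of the dual basis functionals.
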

\begin{proof}It follows from a straightforward verification.
\end{proof}
Obviously, $\sum_{i=0}^{mn-1}\overline{g^{i}}=\varepsilon$, which is the identity of the algebra $(R_{mn}(q))^{*}$.

Let $\xi\in \Bbbk$ be a primitive $mn$-th root of unity with $\xi^{m}=q$. Put $\alpha=\sum^{mn-1}_{i=0}\xi^{i}\overline{g^{i}}$ and $\beta=\sum^{mn-1}_{i=0}\overline{g^{i}x}$.

\begin{lemma}\label{L3.2} $(R_{mn}(q))^{*}$ is generated as an algebra by $\alpha$ and $\beta$.
\end{lemma}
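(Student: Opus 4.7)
The plan is to show that every basis element $\overline{g^{i}x^{j}}$ of $(R_{mn}(q))^{*}$ can be written explicitly as a polynomial in $\alpha$ and $\beta$. I would proceed in two stages: first extract the idempotents $\overline{g^{i}}$ from $\alpha$, then use $\beta$ to recover the higher $x$-powers.

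First, I would observe that the case $j=l=0$ of Lemma \ref{L3.1} says $\overline{g^{i}}\ast\overline{g^{k}}=\delta_{ik}\overline{g^{i}}$, so $\{\overline{g^{i}}\}_{0\le i\le mn-1}$ is a complete system of orthogonal idempotents summing to $\varepsilon$. An easy induction then gives
$$\alpha^{s}=\sum_{i=0}^{mn-1}\xi^{si}\overline{g^{i}}, \qquad s\ge 0.$$
Because $\xi$ is a primitive $mn$-th root of unity in $\Bbbk$ (and $\mathrm{char}(\Bbbk)\nmid mn$), the matrix $(\xi^{si})_{0\le s,i\le mn-1}$ is a nonsingular Vandermonde matrix, so I can invert to obtain
$$\overline{g^{i}}=\frac{1}{mn}\sum_{s=0}^{mn-1}\xi^{-si}\alpha^{s}.$$
Hence every $\overline{g^{i}}$ already lies in the subalgebra $\Bbbk\langle\alpha\rangle$.

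Next, I would prove by induction on $j$ that
$$\beta^{j}=(j)!_{q}\sum_{i=0}^{mn-1}\overline{g^{i}x^{j}}, \qquad 0\le j\le n-1.$$
The base case $j=0,1$ is built into the definition of $\beta$. For the inductive step, using Lemma \ref{L3.1} the only surviving terms in $\left(\sum_{i}\overline{g^{i}x^{j}}\right)\ast\left(\sum_{k}\overline{g^{k}x}\right)$ are those with $k\equiv i+j\pmod{mn}$, each contributing $\binom{j+1}{j}_{q}\overline{g^{i}x^{j+1}}=(j+1)_{q}\overline{g^{i}x^{j+1}}$, which upgrades $(j)!_{q}$ to $(j+1)!_{q}$.

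Finally, since the $\overline{g^{i}}$ are orthogonal idempotents, multiplying on the left gives $\overline{g^{i}}\ast\beta^{j}=(j)!_{q}\,\overline{g^{i}x^{j}}$. Because $q$ is a primitive $n$-th root of unity, $(j)!_{q}$ is invertible in $\Bbbk$ for all $0\le j\le n-1$, so
$$\overline{g^{i}x^{j}}=\frac{1}{(j)!_{q}}\,\overline{g^{i}}\ast\beta^{j}\in\Bbbk\langle\alpha,\beta\rangle.$$
As $\{\overline{g^{i}x^{j}}\}$ is a $\Bbbk$-basis of $(R_{mn}(q))^{*}$, this shows $\alpha$ and $\beta$ generate the algebra. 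The only real work is the inductive identity for $\beta^{j}$; once that is established, combining it with the Vandermonde inversion for $\alpha$ finishes the proof with no further difficulty.
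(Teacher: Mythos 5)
Your proof is correct and follows essentially the same route as the paper: express $\alpha^{s}$ and $\beta^{j}$ in the basis $\{\overline{g^{i}x^{j}}\}$ via Lemma \ref{L3.1} and invert the resulting Vandermonde system. Your version is slightly cleaner in that you invert explicitly by the orthogonality relation $\overline{g^{i}}=\frac{1}{mn}\sum_{s}\xi^{-si}\alpha^{s}$ and then use the idempotents to peel off $\overline{g^{i}x^{j}}=\frac{1}{(j)!_{q}}\overline{g^{i}}\ast\beta^{j}$, whereas the paper invokes Cramer's rule, but the substance is the same.
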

\begin{proof}
Obviously, $\alpha,\beta\in(R_{mn}(q))^{*}.$ Let $A$ be a subalgebra of $(R_{mn}(q))^{*}$ generated by $\alpha$ and $\beta$. It follows from Lemma \ref{L3.1} that $\beta^{j}=(j)!_{q}(\overline{x^{j}}+\overline{gx^{j}}+...+\overline{g^{mn-1}x^{j}}),1\leq j\leq n-1$, and
\begin{equation}\label{gs3.1}
\begin{aligned}
\alpha&=\overline{1}+\xi\overline{g}+\xi^{2}\overline{g^{2}}+...+\xi^{mn-1}\overline{g^{mn-1}},\\
\alpha^{2}&=\overline{1}+\xi^{2}\overline{g}+\xi^{4}\overline{g^{2}}+...+\xi^{2(mn-1)}\overline{g^{mn-1}},\\
&...\\
\alpha^{mn-1}&=\overline{1}+\xi^{mn-1}\overline{g}+\xi^{2(mn-1)}\overline{g^{2}}+...+\xi^{(mn-1)(mn-1)}\overline{g^{mn-1}}.
\end{aligned}
\end{equation}
Then we have
\begin{equation*}
\begin{aligned}
\frac{1}{(j)!_{q}}\alpha\beta^{j}&=\overline{x^{j}}+\xi\overline{gx^{j}}+\xi^{2}\overline{g^{2}x^{j}}+...+\xi^{mn-1}\overline{g^{mn-1}x^{j}},\\
\frac{1}{(j)!_{q}}\alpha^{2}\beta^{j}&=\overline{x^{j}}+\xi^{2}\overline{gx^{j}}+\xi^{4}\overline{g^{2}x^{j}}+...+\xi^{2 (mn-1)}\overline{g^{2(mn-1)}x^{j}},\\
&...\\
\frac{1}{(j)!_{q}}\alpha^{mn-1}\beta^{j}&=\overline{x^{j}}+\xi^{mn-1}\overline{gx^{j}}+\xi^{2(mn-1)}\overline{g^{2}x^{j}}+...+\xi^{(mn-1)(mn-1)}\overline{g^{mn-1}x^{j}},\\
\frac{1}{(j)!_{q}}\alpha^{mn}\beta^{j}&=\overline{x^{j}}+\overline{gx^{j}}+\overline{g^{2}x^{j}}+...+\overline{g^{mn-1}x^{j}}.
\end{aligned}
\end{equation*}

The coefficient determinant of (\ref{gs3.1}) is

$|B|=$
$
\left |
\begin{matrix}
1 & \xi & \xi^{2}&\cdots& \xi^{mn-1}\\
1 & \xi^{2} & \xi^{4}&\cdots& \xi^{2(mn-1)}\\
                &\cdots\\
1 & \xi^{mn-1} & \xi^{2(mn-1)}&\cdots& \xi^{(mn-1)(mn-1)}\\
\end{matrix}
\right |
$
$=\prod_{0\leq i<j\leq mn-1}(\xi^{i}-\xi^{j})\neq0.$

Therefore, by Cramer's Rule, we have

$$\overline{g}=\frac{
\left |
\begin{matrix}
1 & \alpha & \xi^{2}&\cdots& \xi^{mn-1}\\
1 & \alpha^{2} & \xi^{4}&\cdots& \xi^{2(mn-1)}\\
                &\cdots\\
1 & \alpha^{mn-1} & \xi^{2(mn-1)}&\cdots& \xi^{(mn-1)(mn-1)}\\
\end{matrix}
\right |
}{
\left |
\begin{matrix}
1 & \xi & \xi^{2}&\cdots& \xi^{mn-1}\\
1 & \xi^{2} & \xi^{4}&\cdots& \xi^{2(mn-1)}\\
                &\cdots\\
1 & \xi^{mn-1} & \xi^{2(mn-1)}&\cdots& \xi^{(mn-1)(mn-1)}\\
\end{matrix}
\right |
}\in A.$$

Similarly, one can prove that $\overline{g^{i}x^{j}}\in A$ for $0\leq i\leq mn-1, 0\leq j \leq n-1.$ The lemma is proved.

\end{proof}

\begin{proposition}\label{2.15} In $(R_{mn}(q))^{*}$, we have
$$\alpha^{mn}=\varepsilon,\ \beta^{n}=0,\ \beta\alpha=\xi\alpha\beta.$$
\end{proposition}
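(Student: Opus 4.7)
The plan is to deduce all three identities directly from the multiplication rule in Lemma \ref{L3.1}, exploiting the fact that $(R_{mn}(q))^*$ is presented with an explicit basis whose products are fully described there.

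The first key observation is that setting $j=l=0$ in Lemma \ref{L3.1} gives $\overline{g^i}\ast\overline{g^k}=\delta_{i,k}\overline{g^i}$, so $\{\overline{g^i}\}_{i=0}^{mn-1}$ is a complete orthogonal system of idempotents whose sum is $\varepsilon$. A trivial induction then yields $\alpha^t=\sum_{i=0}^{mn-1}\xi^{ti}\overline{g^i}$ for every $t\geq 1$, and specializing to $t=mn$ collapses the coefficients to $1$ by virtue of $\xi^{mn}=1$. This gives $\alpha^{mn}=\varepsilon$ immediately.

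For $\beta^n=0$, I would reuse the identity $\beta^{n-1}=(n-1)!_q\sum_{i=0}^{mn-1}\overline{g^i x^{n-1}}$ that was already established in the proof of Lemma \ref{L3.2}, then left-multiply by $\beta=\sum_k\overline{g^k x}$. Each of the resulting products $\overline{g^k x}\ast\overline{g^i x^{n-1}}$ has $l+j=(n-1)+1=n\geq n$, which triggers the ``otherwise'' vanishing branch of Lemma \ref{L3.1}. Hence every term is zero.

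For $\beta\alpha=\xi\alpha\beta$, I would expand both sides using Lemma \ref{L3.1}. In $\alpha\beta$ the condition $k=i+j$ with $j=0$ forces $k=i$, so only the diagonal terms $\overline{g^i}\ast\overline{g^i x}$ survive, producing $\sum_i\xi^i\overline{g^i x}$. In $\beta\alpha$ the same condition with $j=1$ forces the surviving terms to be $\overline{g^k x}\ast\overline{g^{k+1}}$ with indices taken mod $mn$, producing $\sum_k\xi^{k+1}\overline{g^k x}=\xi\,\alpha\beta$. None of these steps is genuinely hard; the only delicate point — and the closest thing to an obstacle — is tracking the mod-$mn$ index shift in $\beta\alpha$, which is exactly what accounts for the factor $\xi$ (and not $\xi^{-1}$) appearing on the right-hand side.
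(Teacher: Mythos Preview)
Your proposal is correct and is exactly the approach the paper takes: the paper's entire proof is the single sentence ``It follows from Lemma~\ref{L3.1},'' and you have simply written out the details of that deduction. One small wording slip: the case $l+j\geq n$ is the \emph{first} (vanishing) branch of Lemma~\ref{L3.1}, not the ``otherwise'' branch, so you should rephrase that sentence, but the mathematics is fine.
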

\begin{proof}It follows from Lemma \ref{L3.1}.
\end{proof}
For any positive integer $s$, let $s^{\dagger}$ denote the unique integer such that $0\leq s^{\dagger}\leq mn-1$ and $mn\mid (s-s^{\dagger})$. Then we have the following proposition.
\begin{proposition}\label{2.12} The comultiplication, the counit and the antipode of $((R_{mn}(q))^{*})^{cop}$ are given by
$$\Delta(\alpha)=\alpha\otimes\alpha+(\xi^{n}-1)\sum_{\substack{k+l=n\\0<k<n}}\frac{1}{(k)!_{q}(l)!_{q}}\alpha^{mk+1}\beta^{l}\otimes\alpha\beta^{k},$$
$$\Delta(\beta)=\beta\otimes1+\alpha^{m}\otimes\beta,$$
$$\varepsilon(\alpha)=1,\ \varepsilon(\beta)=0,$$
$$S(\alpha)=\alpha^{mn-1},\ S(\beta)=-\alpha^{-m}\beta.$$
\end{proposition}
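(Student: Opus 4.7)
The plan is to verify each of the four formulas separately by pairing with the basis $\{g^ix^j\otimes g^sx^t\}$ of $R_{mn}(q)\otimes R_{mn}(q)$ for the two coproducts, or with $\{g^ix^j\}$ for the counit and antipode, using the standard duality identities. Since the Hopf algebra in question is $((R_{mn}(q))^*)^{cop}$, the structure maps act on a functional $\phi$ as $\Delta(\phi)(a\otimes b) = \phi(ba)$, $\varepsilon(\phi) = \phi(1)$, and $S(\phi) = \phi\circ S_R^{-1}$, where $S_R$ is the antipode of $R_{mn}(q)$. Two preliminary computations are needed throughout. First, Lemma \ref{L3.1} yields the closed form $\alpha^a\beta^b = (b)!_q\sum_{i=0}^{mn-1}\xi^{ai}\,\overline{g^ix^b}$, so that $\alpha^a\beta^b(g^sx^t) = (b)!_q\,\xi^{as}\delta_{t,b}$. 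Second, the product rule $g^kx^l\cdot g^ix^j = q^{il}g^{k+i}x^{l+j}$ in $R_{mn}(q)$ follows iteratively from $xg = qgx$, together with the reduction $x^n = g^n - 1$ to be used when $l+j\ge n$.

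The counit formulas are immediate: $\alpha(1) = 1$ and $\beta(1) = 0$. For $\Delta(\beta)$, one computes $\beta(g^sx^t\cdot g^ix^j)$; the wrap-around case $t+j=n+1$ contributes $q^{it}(\beta(g^{s+i+n}x)-\beta(g^{s+i}x))=0$, so the net value is $q^{it}\delta_{t+j,1}$. Expanding $(\beta\otimes 1+\alpha^m\otimes\beta)(g^ix^j\otimes g^sx^t) = \delta_{j,1}\delta_{t,0}+q^{i}\delta_{j,0}\delta_{t,1}$ (using $\xi^m=q$) reproduces the same value.

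The heart of the proof is $\Delta(\alpha)$. Evaluating $\alpha(g^sx^t\cdot g^ix^j)$, only $t+j\in\{0,n\}$ contributes, since $\alpha$ kills any term with a nonzero power of $x$. In the case $t=j=0$ the value is $\xi^{s+i}$, which is exactly $(\alpha\otimes\alpha)(g^ix^j\otimes g^sx^t)$. In the case $t+j=n$ with $0<t,j<n$, reducing via $x^n=g^n-1$ yields $\alpha(g^sx^t\cdot g^ix^j) = q^{it}\xi^{s+i}(\xi^n-1)$. On the other side, the preliminary closed form gives $(\alpha^{mk+1}\beta^l\otimes\alpha\beta^k)(g^ix^j\otimes g^sx^t) = (l)!_q(k)!_q\,q^{ki}\xi^{i+s}\delta_{j,l}\delta_{t,k}$, so the normalizing factor $\frac{1}{(k)!_q(l)!_q}$ cancels the $q$-factorials and the double delta forces $(l,k)=(j,t)$ with $k+l=n$, producing exactly the wrap-around contribution on the left.

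For the antipode, one first computes $S_R^{-1}$ on the PBW basis. Starting from $S_R(x)=-xg^{-1}$, an induction gives $(xg^{-1})^j = q^{\binom{j}{2}}x^jg^{-j}$, and solving $S_R(y)=g^ix^j$ leads to $S_R^{-1}(g^ix^j) = (-1)^j q^{-\binom{j}{2}-ij}g^{-i-j}x^j$. Pairing with $\alpha$ vanishes unless $j=0$, in which case it yields $\xi^{-i}$, identifying $S(\alpha)$ with $\alpha^{mn-1}$; pairing with $\beta$ vanishes unless $j=1$, in which case it gives $-q^{-i}$, matching $-\alpha^{-m}\beta = -\sum_{i}q^{-i}\overline{g^ix}$. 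The main obstacle is the $\Delta(\alpha)$ calculation: one must cleanly separate the non-wrap contribution (giving $\alpha\otimes\alpha$) from the wrap contribution (giving the correction sum with its $\xi^n-1$ prefactor and $q^{it}$ twist), and then recognize the latter precisely through the closed form for $\alpha^a\beta^b$.
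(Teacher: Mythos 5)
Your proposal is correct: all the pairings check out (in particular $\alpha^a\beta^b(g^sx^t)=(b)!_q\xi^{as}\delta_{t,b}$, the wrap-around contribution $q^{it}\xi^{s+i}(\xi^n-1)$ from $x^n=g^n-1$ when $t+j=n$, and the formula $S_R^{-1}(g^ix^j)=(-1)^jq^{-\binom{j}{2}-ij}g^{-i-j}x^j$). This is essentially the paper's argument — a direct duality computation hinging on the same relation $x^n=g^n-1$ — differing only in that you verify the stated formulas by evaluating against basis tensors of $R_{mn}(q)\otimes R_{mn}(q)$, whereas the paper expands $\Delta(\overline{g^t})$ in the dual basis and sums.
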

\begin{proof}We only consider the formula of $\Delta(\alpha)$ since the proof for $\Delta(\beta)$ is similar. In $(R_{mn}(q))^{*}$, we have
\begin{equation*}
\begin{split}
\Delta(\overline{g^{t}})&=\sum_{(i+j)^{\dagger}=t}\overline{g^{i}}\otimes\overline{g^{j}}-\sum_{\substack{(k+l)=n\\0<k<n}}\sum_{(i+j)^{\dagger}=t}q^{jk}\overline{g^{i}x^{k}}\otimes\overline{g^{j}x^{l}}\\
&\  \ \
+\sum_{\substack{(k+l)=n\\0<k<n}}\sum_{(i+j)^{\dagger}=(m-1)n+t}q^{jk}\overline{g^{i}x^{k}}\otimes\overline{g^{j}x^{l}},
\end{split}
\end{equation*}
where $(i+j)^{\dagger}$ is defined as above. Moreover,
$$\Delta(\alpha)=\alpha\otimes\alpha+(\xi^{n}-1)\sum_{\substack{k+l=n\\0<k<n}}\sum_{(i+j)^{\dagger}=0}(\xi^{n}-1)q^{jk}\frac{\xi^{-i}}{(k)!_{q}}\alpha\beta^{k}\otimes\overline{g^{j}x^{l}}.$$
Hence,
$$\Delta(\alpha)=\alpha\otimes\alpha+(\xi^{n}-1)\sum_{\substack{k+l=n\\0<k<n}}\frac{1}{(k)!_{q}(l)!_{q}}\alpha\beta^{k}\otimes\alpha^{mk+1}\beta^{l}.$$
Consequently, in $((R_{mn}(q))^{*})^{cop}$, we have
$$\Delta(\alpha)=\alpha\otimes\alpha+(\xi^{n}-1)\sum_{\substack{k+l=n\\0<k<n}}\frac{1}{(k)!_{q}(l)!_{q}}\alpha^{mk+1}\beta^{l}\otimes\alpha\beta^{k}.$$
Similarly, one can show the formulas of the counit $\varepsilon$ and antipode $S$ of $((R_{mn}(q))^{*})^{cop}$.
\end{proof}
By \cite[IX.4]{Ka}, we have the following propositions:
\begin{proposition}\label{2.12}\cite[Definition IX.4.1.]{Ka} The quantum double $D(R_{mn}(q))$ of the Hopf algebra $R_{mn}(q)$ is the bicrossed product of $R_{mn}(q)$ and of $(R_{mn}(q)^{*})^{cop}.$
$$D(R_{mn}(q))=(R_{mn}(q)^{*})^{cop}\bowtie R_{mn}(q).$$
\end{proposition}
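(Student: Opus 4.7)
The plan is simply to invoke the general construction of the Drinfeld double as formulated in \cite[IX.4.1]{Ka}. Since the proposition is explicitly a citation of Kassel's definition applied to $H=R_{mn}(q)$, the ``proof'' amounts to checking that the hypotheses of the general construction are satisfied in the present setting and that both factors of the bicrossed product are in hand.

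First, I would note that $R_{mn}(q)$ is finite-dimensional of dimension $mn^{2}$; this is the only ingredient required for the linear dual $R_{mn}(q)^{*}$ to inherit a Hopf algebra structure and hence for $(R_{mn}(q)^{*})^{cop}$ to be a Hopf algebra as well. The preceding lemmas and proposition of this section have already determined the algebra, coalgebra, and antipode structure of $(R_{mn}(q)^{*})^{cop}$ explicitly in terms of the generators $\a$ and $\b$, so both tensor factors of the proposed bicrossed product are explicitly described.

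Next, I would recall the bicrossed product construction. On the vector space $(R_{mn}(q)^{*})^{cop}\ot R_{mn}(q)$ the coalgebra structure is the tensor product coalgebra structure, while the algebra structure is determined by the requirement that $(R_{mn}(q)^{*})^{cop}\ot 1$ and $1\ot R_{mn}(q)$ are subalgebras, together with cross relations governed by the canonical mutual actions $\rhp$ and $\lhp$ recalled in \seref{2} via the evaluation pairing $\langle\cdot,\cdot\rangle$. The antipode is assembled from the antipodes of the two factors, twisted by these same actions.

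The only substantive thing to verify is the matched-pair compatibility between the coalgebra $(R_{mn}(q)^{*})^{cop}$ and the algebra $R_{mn}(q)$, but this compatibility holds automatically for the canonical pair $((H^{*})^{cop},H)$ attached to any finite-dimensional Hopf algebra $H$; this is precisely the content of \cite[IX.4.1]{Ka}. Consequently no genuine obstacle arises: the statement is a direct instance of the general theorem, recorded here in order to fix the notation $\bowtie$ for $D(R_{mn}(q))$ and to prepare for the explicit computation of its multiplication, its $\mathcal{R}$-matrix, and ultimately its ribbon elements in \seref{4}.
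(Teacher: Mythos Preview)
Your proposal is correct and matches the paper's treatment: the paper provides no proof for this proposition, simply citing \cite[Definition IX.4.1]{Ka}, and your write-up appropriately recognizes that the statement is just the specialization of Kassel's general definition to $H=R_{mn}(q)$, with the only hypothesis (finite-dimensionality) already established.
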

By Proposition \ref{2.12} and Lemma \ref{L3.2}, one knows that $D(R_{mn}(q))$ is generated as an algebra by $\varepsilon\bowtie g$, $\varepsilon\bowtie x$, $\alpha\bowtie1$, $\beta\bowtie1$.
\begin{proposition}\label{3.6}\cite[Lemma IX.4.2.]{Ka} The multiplication, comultiplication and counit in $D(R_{mn}(q))$ are given by
$$(f\bowtie a)(g\bowtie b)=\sum_{(a)}f(a_{1}\rightharpoonup g\leftharpoonup s^{-1}(a_{3}))\bowtie a_{2}b,$$
$$\varepsilon(f\bowtie a)=\varepsilon(a)f(1),$$
$$\Delta(f\bowtie a)=\sum_{(a)(f)}(f_{1}\bowtie a_{1})(f_{2}\bowtie a_{2}),$$
\begin{equation*}
\begin{split}
S(f\bowtie a)&=\sum(S(a_{2})\rightharpoonup S(f_{1}))\bowtie (f_{2}\rightharpoonup S(a_{1}))\\
&=\sum(S(f_{2})\leftharpoonup a_{1})\bowtie (S(a_{2})\leftharpoonup S(f_{1})),
\end{split}
\end{equation*}
where $f,g\in (R_{mn}(q)^{*})^{cop}$ and $a,b\in R_{mn}(q),\ \sum f_{1}(x)f_{2}(y)=f(yx)$ for all $x,y\in R_{mn}(q)$.
\end{proposition}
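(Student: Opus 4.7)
The plan is to deduce the proposition by specializing Kassel's bicrossed-product construction \cite[Ch.~IX]{Ka} to $H = R_{mn}(q)$. Since the statement is already cited from \cite{Ka}, the task is really to indicate why each of the four formulas is the correct instantiation, and no genuinely new calculation is required beyond unpacking definitions.

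I would start by recalling that, for any finite dimensional Hopf algebra $H$, the vector space $D(H) = (H^{*})^{cop} \otimes H$ admits a unique Hopf algebra structure such that the canonical maps $(H^{*})^{cop} \hookrightarrow D(H)$, $f \mapsto f \bowtie 1$, and $H \hookrightarrow D(H)$, $a \mapsto \varepsilon \bowtie a$, are Hopf algebra embeddings, and $(f \bowtie 1)(\varepsilon \bowtie a) = f \bowtie a$. The non-trivial content is how to reorder $(\varepsilon \bowtie a)(f \bowtie 1)$, which is governed by the $H$-bimodule structure on $H^{*}$ recalled at the end of Section~2.

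Next I would verify the four formulas in order. The multiplication formula is the commutation rule of the bicrossed product: using the left and right actions $\rightharpoonup, \leftharpoonup$ together with coassociativity applied to $a$, one rewrites $(\varepsilon \bowtie a)(f \bowtie 1)$ as $\sum (a_{1} \rhu f \leftharpoonup S^{-1}(a_{3})) \bowtie a_{2}$, and then combines with the trivial $(f \bowtie 1)(g \bowtie b) = fg \bowtie b$. The comultiplication formula records that $D(H)$ is the tensor product of $(H^{*})^{cop}$ and $H$ as a coalgebra, and the counit formula uses $\varepsilon_{H^{*}}(f) = f(1)$. For the antipode, I would use that $S_{D(H)}$ is an antihomomorphism together with $S_{D(H)}(f \bowtie 1) = S(f) \bowtie 1$ and $S_{D(H)}(\varepsilon \bowtie a) = \varepsilon \bowtie S(a)$, and then apply the already-established multiplication rule to arrive at the two displayed expressions.

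The main (and essentially only) subtlety is keeping track of the fact that the coalgebra structure on $H^{*}$ is taken to be the opposite one: the defining identity $\sum f_{1}(x) f_{2}(y) = f(yx)$ for $f \in (H^{*})^{cop}$ is precisely what makes the matched-pair axioms hold and, in particular, makes $\Delta$ an algebra map for the stated multiplication. Since this verification is carried out in full generality in \cite[Lemma~IX.4.2]{Ka}, the proposition follows by instantiating $H = R_{mn}(q)$.
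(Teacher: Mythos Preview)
Your proposal is correct and in fact goes beyond what the paper does: the paper gives no proof at all for this proposition, simply citing \cite[Lemma~IX.4.2]{Ka} as the source and moving on. Your sketch---instantiating Kassel's general bicrossed-product construction at $H=R_{mn}(q)$ and noting the role of the $cop$ convention---is exactly the right justification, and is more explanatory than the paper's bare citation.
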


\section{\bf The ribbon elements of $(D(R_{mn}(q)),\mathcal{R})$}\selabel{4}
In this section, we recall some results about quasi-triangular Hopf algebras and use them to determine the existence of ribbon elements of $(D(R_{mn}(q)),\mathcal{R})$ and {give} explicit expressions for these ribbon elements.
\subsection{\bf Universal R-matrix of $D(R_{mn}(q))$}\selabel{s4.1}
~~

In this subsection, we determine the universal R-matrix of $D(R_{mn}(q))$.

By \cite[IX.4.2.]{Ka}, the universal $\mathcal{R}$-matrix of the quantum double has {an} explicit formula:
$$\mathcal{R}=\sum_{i\in I}(1\bowtie e_{i})\otimes (e^{i}\bowtie 1),$$
where $\{e_{i}\}_{i\in I}$ is a basis of the vector space $H$ and $\{e^{i}\}_{i\in I}$ is its dual basis in $(H^{op})^{*}=(H^{*})^{cop}$.
\begin{lemma}\label{L4.1}For any $0\leq i\leq mn-1,\ 0\leq j\leq n-1$, set
$$y_{i,j}=\frac{1}{mn}\frac{1}{(j)!_{q}}\sum^{mn-1}_{k=0}\xi^{-ik}\alpha^{k}\beta^{j}$$
in $(R_{mn}(q))^{*}$. Then $y_{i,j}(g^{i_{1}}x^{j_{1}})=\delta_{i,i_{1}}\delta_{j,j_{1}}$ for all
  $0\leq i,i_{1}\leq mn-1,\ 0\leq j,j_{1}\leq n-1$.
\end{lemma}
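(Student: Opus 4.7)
The plan is to show directly that $y_{i,j}$ coincides with the dual basis element $\overline{g^{i}x^{j}}$; the claimed evaluation $y_{i,j}(g^{i_{1}}x^{j_{1}})=\delta_{i,i_{1}}\delta_{j,j_{1}}$ is then just the definition of that dual basis. So the task reduces to expanding $\alpha^{k}\beta^{j}$ in the basis $\{\overline{g^{a}x^{b}}\}$ and carrying out a finite-Fourier (geometric series) sum.

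First I would compute the powers $\alpha^{k}$ and $\beta^{j}$ individually using \leref{L3.1}. Since $\alpha=\sum_{l=0}^{mn-1}\xi^{l}\overline{g^{l}}$, the multiplication rule with $j_{1}=0=l_{1}$ forces each product $\overline{g^{a}}\ast\overline{g^{b}}$ to be $\delta_{a,b}\overline{g^{a}}$, so by induction $\alpha^{k}=\sum_{l=0}^{mn-1}\xi^{kl}\overline{g^{l}}$. A similar induction, already carried out inside the proof of \leref{L3.2}, gives $\beta^{j}=(j)!_{q}\sum_{c=0}^{mn-1}\overline{g^{c}x^{j}}$ for $0\le j\le n-1$.

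Next I would multiply these two expansions. Using \leref{L3.1} once more, $\overline{g^{l}}\ast\overline{g^{c}x^{j}}$ vanishes unless $c=l$, in which case it equals $\overline{g^{l}x^{j}}$. Therefore
$$\alpha^{k}\beta^{j}=(j)!_{q}\sum_{l=0}^{mn-1}\xi^{kl}\,\overline{g^{l}x^{j}}.$$
Substituting into the definition of $y_{i,j}$ and interchanging the two finite sums gives
$$y_{i,j}=\frac{1}{mn}\sum_{l=0}^{mn-1}\Bigl(\sum_{k=0}^{mn-1}\xi^{k(l-i)}\Bigr)\overline{g^{l}x^{j}}.$$

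The final step is the orthogonality of characters of $\mathbb{Z}/mn\mathbb{Z}$: since $\xi$ is a primitive $mn$-th root of unity, $\sum_{k=0}^{mn-1}\xi^{k(l-i)}$ equals $mn$ when $l\equiv i\pmod{mn}$ and $0$ otherwise. Hence $y_{i,j}=\overline{g^{i}x^{j}}$, which is exactly the dual basis vector, and the stated evaluation follows. I do not anticipate a genuine obstacle here; the only point that requires mild care is the bookkeeping in the multiplication table (keeping track of the conditions $c=l+0$ and $j+0<n$ in \leref{L3.1}), which is why I would treat $\alpha^{k}$ and $\beta^{j}$ separately before combining them.
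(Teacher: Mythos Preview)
Your proof is correct and follows essentially the same approach as the paper: both expand $\alpha^{k}\beta^{j}=(j)!_{q}\sum_{l}\xi^{kl}\overline{g^{l}x^{j}}$ using \leref{L3.1} and then invoke the orthogonality relation $\sum_{k=0}^{mn-1}\xi^{k(l-i)}=mn\,\delta_{l,i}$. The only cosmetic difference is that you identify $y_{i,j}=\overline{g^{i}x^{j}}$ as an element of the dual, whereas the paper checks the evaluation $y_{i,j}(g^{i_{1}}x^{j_{1}})$ pointwise; these are the same computation.
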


\begin{proof} Let $0\leq i\leq mn-1$, $0\leq j\leq n-1$, $\delta_{i,j}$ {be the Kronecker} symbol, then we have
\begin{equation*}
\begin{split}
&y_{i,j}(g^{i_{1}}x^{j_{1}})\\
&=\frac{1}{mn}\frac{1}{(j)!_{q}}(\sum^{mn-1}_{k=0}\xi^{-ik}\alpha^{k}\beta^{j})(g^{i_{1}}x^{j_{1}})\\
&=\frac{1}{mn}\frac{1}{(j)!_{q}}(j)!_{q}(\beta^{j}+\xi^{-1}\alpha\beta^{j}+...+\xi^{-(mn-1)i}\alpha^{mn-1}\beta^{j})(g^{i_{1}}x^{j_{1}})\\
&=\frac{1}{mn}(\sum_{v=0}^{mn-1}\overline{g^{v}x^{j}}+\xi^{-i}\sum_{v=0}^{mn-1}\xi^{v}\overline{g^{v}x^{j}}+...+\xi^{-(mn-1)i}\sum_{v=0}^{mn-1}\xi^{(mn-1)v}\overline{g^{v}x^{j}})(g^{i_{1}}x^{j_{1}})\\
&=\frac{1}{mn}(1+\xi^{(i_{1}-i)}+...+\xi^{(i_{1}-i)(mn-1)})\delta_{j,j_{1}}\\
&=\delta_{i,i_{1}}\delta_{j,j_{1}}.\\
\end{split}
\end{equation*}
\end{proof}

By Lemma \ref{L4.1}, one can easily know the $\mathcal{R}$-matrix of $D(R_{mn}(q))$ is
\begin{equation*}
\begin{split}
\mathcal{R}&=\sum_{i,j}(1\bowtie g^{i}x^{j})\otimes(y_{i,j}\bowtie 1)\\
&=\frac{1}{mn}\sum_{i,j,k}\frac{1}{(j)!_{q}}\xi^{-ik}(1\bowtie g^{i}x^{j})\otimes(\alpha^{k}\beta^{j}\bowtie 1).\\
\end{split}
\end{equation*}

\subsection{The existence of ribbon elements}\selabel{4.1}
~~

In this subsection, we review some facts about integral and quasi-ribbon element for a finite-dimensional Hopf algebra $H$.
The following results on integral can be found in \cite[Chapter 2]{Mon}:

$\bullet$ A left integral element in $H$ is an element $t$ in $H$ such that $ht=\varepsilon(h)t,\forall h\in H.$ A right integral element in $H$ is an element $t^{'}$ in $H$ such that $t^{'}h=\varepsilon(h)t^{'},\forall h\in H.$

$\int^{l}_{H}$ denotes the subspace of left integrals in $H$ and $\int^{r}_{H}$ denotes the subspace of right integrals in $H$. $H$ is called \emph{unimodular} if $\int^{l}_{H}=\int^{r}_{H}$.

$\bullet$ Let $H$ be a finite-dimensional Hopf algebra. Then\\
(1) $\int^{l}_{H}$ and $\int^{r}_{H}$ are {both} one-dimensional. \\
(2) The antipode $S$ of $H$ is bijective and $S(\int^{l}_{H})=\int^{r}_{H},\ S(\int^{r}_{H})=\int^{l}_{H}$.

$\bullet$ Suppose $t\in\int^{l}_{H}$ and $T\in\int^{r}_{H^{*}}$. Notice that left integrals for $H$ form a one-dimensional ideal of $H$. Hence there is a unique $\tilde{\alpha}\in G(H^{*})$ such that $th=\langle\tilde{\alpha},h\rangle t$ for all $h\in H$. The condition that $H$ is unimodular is equivalent to $\tilde{\alpha}=\varepsilon$.

Likewise there is a unique $\tilde{g}\in H$ such that $pT=\langle p,\tilde{g}\rangle T$, for all $p\in H^{*}$. We call $\tilde{\alpha}$ and $\tilde{g}$ be the \emph{distinguished grouplike element} of $H^{*}$ and $H$ respectively.

As above, assume the $R$-matrix is $\mathcal{R}=\sum_{i}x_{i}\otimes y_{i}$ , and define
\begin{equation}\label{4.1}
\begin{split}
g_{\tilde{\alpha}}=\sum_{i}x_{i}\tilde{\alpha}(y_{i}),\ and\ h_{\tilde{\alpha}}=g_{\tilde{\alpha}}\tilde{g}^{-1},
\end{split}
\end{equation}
where $\tilde{\alpha}$ is the distinguished grouplike element of $H^{*}$, and $\tilde{g}$ is the distinguished grouplike element of $H$.

A \emph{quasi-ribbon} element of the Hopf algebra $H$ is an element satisfying all the ribbon conditions in \eqref{2.2} except for the requirement that it is central. {Our approach} {to find an explicit formula for ribbon elements of $D(R_{mn}(q))$ is to use the following} { results about the quasi-ribbon elements.}

\begin{theorem}\label{4.5}\cite[Theorem 1]{Kau}  Let $(H,\mathcal{R})$ be a finite-dimensional quasi-triangular Hopf algebra over a field $\Bbbk$. Suppose $h_{\tilde{\alpha}}^{'}$ is any element of $H$ such that $(h_{\tilde{\alpha}}^{'})^{2}=h_{\tilde{\alpha}}$, $\emph{i.e.}\  h_{\tilde{\alpha}}^{'}$ is any square root of the element $h_{\tilde{\alpha}}$ in (\ref{4.1}). Then $v=uh_{\tilde{\alpha}}^{'}$ is a quasi-ribbon element, where $u$ is as in (\ref{2}).
 Moreover, $v=uh_{\tilde{\alpha}}^{'}$ is a ribbon element of $(H,\mathcal{R})$ if and only if $S^{2}(a)=(h_{\tilde{\alpha}}^{'})^{-1}ah_{\tilde{\alpha}}^{'}$ for all $a\in H$.
\end{theorem}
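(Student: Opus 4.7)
My plan is to verify the four quasi-ribbon axioms of \eqref{2.2} directly for $v = u h_{\tilde{\alpha}}'$ and then to massage the centrality condition into the stated form. The key ingredients are the Drinfeld identities (D1) $u a u^{-1} = S^{2}(a)$, (D2) $\Delta(u) = (\mathcal{R}^{op}\mathcal{R})^{-1}(u \otimes u)$ with $\varepsilon(u)=1$, together with Radford's theorem that $h_{\tilde{\alpha}}$ is grouplike in $H$ and satisfies the bridging relation $S(u) = h_{\tilde{\alpha}}\, u$ (equivalently $h_{\tilde{\alpha}} = S(u)u^{-1}$). Since grouplike elements are fixed by $S^{2}$, (D1) applied to $h_{\tilde{\alpha}}$ gives $u h_{\tilde{\alpha}} = h_{\tilde{\alpha}} u$; one then takes $h_{\tilde{\alpha}}'$ to be a grouplike square root, so that $\Delta(h_{\tilde{\alpha}}') = h_{\tilde{\alpha}}' \otimes h_{\tilde{\alpha}}'$, $\varepsilon(h_{\tilde{\alpha}}')=1$, $S(h_{\tilde{\alpha}}') = (h_{\tilde{\alpha}}')^{-1}$, and $h_{\tilde{\alpha}}'u = uh_{\tilde{\alpha}}'$.

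The four quasi-ribbon axioms then reduce to bookkeeping. Using $h_{\tilde{\alpha}}' u = u h_{\tilde{\alpha}}'$ and $S(u) = h_{\tilde{\alpha}} u$,
\begin{equation*}
v^{2} \;=\; u h_{\tilde{\alpha}}' u h_{\tilde{\alpha}}' \;=\; u^{2}(h_{\tilde{\alpha}}')^{2} \;=\; u^{2} h_{\tilde{\alpha}} \;=\; u(h_{\tilde{\alpha}} u) \;=\; u S(u);
\end{equation*}
$\varepsilon(v) = \varepsilon(u)\varepsilon(h_{\tilde{\alpha}}') = 1$ is immediate, and
\begin{equation*}
\Delta(v) \;=\; \Delta(u)\Delta(h_{\tilde{\alpha}}') \;=\; (\mathcal{R}^{op}\mathcal{R})^{-1}(u \otimes u)(h_{\tilde{\alpha}}' \otimes h_{\tilde{\alpha}}') \;=\; (\mathcal{R}^{op}\mathcal{R})^{-1}(v \otimes v),
\end{equation*}
where the last equality uses the standard fact that $\mathcal{R}^{op}\mathcal{R}$ commutes with every $\Delta(x)$ (a direct consequence of $\mathcal{R}\Delta(x)=\Delta^{op}(x)\mathcal{R}$), in particular with $\Delta(h_{\tilde{\alpha}}') = h_{\tilde{\alpha}}' \otimes h_{\tilde{\alpha}}'$. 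Finally $S(v) = S(h_{\tilde{\alpha}}')S(u) = (h_{\tilde{\alpha}}')^{-1} h_{\tilde{\alpha}} u = h_{\tilde{\alpha}}' u = v$.

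For the \emph{moreover} clause, centrality of $v = u h_{\tilde{\alpha}}'$ is the condition $u h_{\tilde{\alpha}}'\, a = a u h_{\tilde{\alpha}}'$ for all $a \in H$. Rewriting (D1) as $au = u S^{-2}(a)$ and cancelling the left $u$ reduces this to $h_{\tilde{\alpha}}' a = S^{-2}(a) h_{\tilde{\alpha}}'$, i.e.\ $(h_{\tilde{\alpha}}')^{-1} a h_{\tilde{\alpha}}' = S^{2}(a)$ for all $a \in H$, which is exactly the stated characterization.

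The hard part is not any of the calculations above but installing the nontrivial Radford relation $S(u) = h_{\tilde{\alpha}} u$: this is the bridge between Drinfeld's element $u$ (built from $\mathcal{R}$) and the distinguished grouplikes $\tilde{g}$, $\tilde{\alpha}$, and everything else is leveraged off it. A secondary subtlety is the implicit assumption that a grouplike square root $h_{\tilde{\alpha}}'$ is available; this is precisely the point at which parity conditions on $n$ and $m$ will intervene when the theorem is later applied to $D(R_{mn}(q))$.
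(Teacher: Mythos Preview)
The paper does not prove this statement; it is quoted from Kauffman--Radford \cite{Kau} and used as a black box in the proof of Theorem~\ref{4.22}. There is therefore no in-paper argument to compare against, and your reconstruction of the cited result is essentially correct.

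One point deserves comment. The theorem as transcribed in the paper says ``any element $h_{\tilde{\alpha}}'$ with $(h_{\tilde{\alpha}}')^{2}=h_{\tilde{\alpha}}$'', but your proof (rightly) takes $h_{\tilde{\alpha}}'$ to be a \emph{grouplike} square root: every step after the first paragraph uses $\Delta(h_{\tilde{\alpha}}')=h_{\tilde{\alpha}}'\otimes h_{\tilde{\alpha}}'$, $\varepsilon(h_{\tilde{\alpha}}')=1$, $S(h_{\tilde{\alpha}}')=(h_{\tilde{\alpha}}')^{-1}$, and $S^{2}(h_{\tilde{\alpha}}')=h_{\tilde{\alpha}}'$ (the last to get $uh_{\tilde{\alpha}}'=h_{\tilde{\alpha}}'u$ from (D1)). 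This grouplike hypothesis is in fact what the original \cite{Kau} assumes, and it is also how the present paper applies the result (all candidate square roots $h_{\varepsilon_D}'$ in Theorem~\ref{4.22} lie in $G(D(R_{mn}(q)))$), so the discrepancy is an imprecision in the paper's restatement rather than a gap in your argument. A minor aside: in your $\Delta(v)$ computation the commutation of $\mathcal{R}^{op}\mathcal{R}$ with $\Delta(h_{\tilde{\alpha}}')$ is not actually needed, since $(\mathcal{R}^{op}\mathcal{R})^{-1}(u\otimes u)(h_{\tilde{\alpha}}'\otimes h_{\tilde{\alpha}}')=(\mathcal{R}^{op}\mathcal{R})^{-1}(v\otimes v)$ directly. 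Your closing remark that the bridging identity $S(u)=h_{\tilde{\alpha}}u$ is where the real work lies is exactly the point.
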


\begin{theorem}\label{4.6}\cite[Theorem 3]{Kau}  Suppose that $H$ is a finite-dimensional Hopf algebra with antipode $S$ over a {field} $\Bbbk$. Let $\tilde{g}$ and $\tilde{\alpha}$ be the distinguished grouplike elements of $H$ and $H^{*}$, respectively. Then:

(1) $(D(H),\mathcal{R})$ has a quasi-ribbon element if and only if there are $h\in G(H)$ and $\gamma\in G(H^{*})$ such that $h^{2}=\tilde{g}$ and $\gamma^{2}=\tilde{\alpha}$.

(2) $(D(H),\mathcal{R})$ has a ribbon element if and only if there are $h\in G(H)$ and $\gamma\in G(H^{*})$ as in part (1) such that
$$S^{2}(y)=h(\gamma\rightharpoonup y\leftharpoonup\gamma^{-1})h^{-1}$$
for all $y\in H$.
\end{theorem}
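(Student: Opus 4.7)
The plan is to reduce both parts of the statement to Theorem \ref{4.5} applied to the quasi-triangular Hopf algebra $D(H)$. The first step is to compute the element $h_{\tilde{\alpha}'}\in D(H)$, where $\tilde{\alpha}'$ denotes the distinguished grouplike of $D(H)^*$. It is a classical result of Radford that $D(H)$ is always unimodular, so $\tilde{\alpha}' = \varepsilon_{D(H)}$. Substituting this into the formula for $g_{\tilde{\alpha}'}$ with the universal $\mathcal{R}$-matrix $\mathcal{R} = \sum_i (1 \bowtie e_i) \otimes (e^i \bowtie 1)$ collapses the sum to $g_{\tilde{\alpha}'} = 1 \bowtie 1$, giving $h_{\tilde{\alpha}'} = \tilde{g}_{D(H)}^{-1}$. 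A second formula of Radford identifies the distinguished grouplike of $D(H)$ as $\tilde{g}_{D(H)} = \tilde{\alpha} \bowtie \tilde{g}$, where $\tilde{\alpha}$ and $\tilde{g}$ are the distinguished grouplikes of $H^*$ and $H$; hence $h_{\tilde{\alpha}'} = (\tilde{\alpha} \bowtie \tilde{g})^{-1}$.

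For part (1), a square root of $h_{\tilde{\alpha}'}$ in $D(H)$ is equivalent to a square root of $\tilde{\alpha} \bowtie \tilde{g}$. A direct bicrossed-product computation shows that for any $\gamma \in G(H^*)$ and $h \in G(H)$, one has $(\gamma \bowtie h)^2 = \gamma^2 \bowtie h^2$, because the conjugation term arising from the bicrossed-product multiplication collapses when $\gamma$ is a character and $h$ is grouplike. Thus if $\gamma^2 = \tilde{\alpha}$ and $h^2 = \tilde{g}$, then $(\gamma \bowtie h)^{-1}$ is a square root of $h_{\tilde{\alpha}'}$, yielding a quasi-ribbon element by Theorem \ref{4.5}. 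Conversely, grouplikes of $D(H)$ factor uniquely as $\gamma \bowtie h$ with $\gamma \in G(H^*)$ and $h \in G(H)$, and one argues that any quasi-ribbon element of $D(H)$ forces a grouplike (hence factored) square root of $\tilde{\alpha} \bowtie \tilde{g}$, producing the required $\gamma$ and $h$ by projection to the two factors.

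For part (2), Theorem \ref{4.5} says that the quasi-ribbon $v = u\cdot(\gamma \bowtie h)^{-1}$ is central, and thus a genuine ribbon element, precisely when $(\gamma \bowtie h)\, a\, (\gamma \bowtie h)^{-1} = S^2(a)$ for every $a \in D(H)$. As $D(H)$ is generated by the subalgebras $1 \bowtie H$ and $H^* \bowtie 1$, it suffices to verify the condition on generators. Unpacking the bicrossed-product multiplication on a generator $1 \bowtie y$ with $y \in H$ rewrites the conjugation as $1 \bowtie \bigl[h(\gamma \rightharpoonup y \leftharpoonup \gamma^{-1})h^{-1}\bigr]$, producing exactly the stated condition. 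On a generator $f \bowtie 1$ with $f \in H^*$, the analogous identity reduces (via the fact that $S^2|_{H^*}$ is implemented by conjugation with the distinguished grouplikes through Radford's $S^4$ formula on $D(H)$) to a consequence of the $H$-condition, so no independent constraint appears.

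The main obstacle is the pair of bookkeeping computations underlying Steps 1 and 3: (i) identifying $\tilde{g}_{D(H)}$ with $\tilde{\alpha} \bowtie \tilde{g}$ in the correct conventions, which requires the explicit integral on $D(H)$ built from the integrals of $H$ and $H^*$; and (ii) unpacking the conjugation in $D(H)$ on elements of $H$ to obtain precisely the formula $h(\gamma \rightharpoonup y \leftharpoonup \gamma^{-1})h^{-1}$. Both are routine Sweedler-index calculations in the bicrossed product once the requisite integral theory and antipode formulas are assembled, but getting the signs and directions of the left/right $H^*$-actions correct is where the bulk of the verification lies.
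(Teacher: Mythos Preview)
The paper does not supply its own proof of this statement: Theorem~\ref{4.6} is quoted verbatim from Kauffman--Radford \cite[Theorem~3]{Kau} and is used as a black box in the subsequent arguments (Theorem~\ref{4.17} and Theorem~\ref{4.22}). There is therefore no in-paper proof against which to compare your proposal.

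That said, your outline is essentially the strategy of the original Kauffman--Radford argument: reduce to their Theorem~1 (the paper's Theorem~\ref{4.5}) by computing $h_{\tilde{\alpha}'}$ for $D(H)$ via unimodularity and the identification $\tilde{g}_{D(H)}=\tilde\alpha\bowtie\tilde g$, and then translate the square-root and conjugation conditions into statements about $G(H^*)\times G(H)$. One point you gloss over deserves care: in the converse direction of part~(1) you assert that a quasi-ribbon element ``forces a grouplike square root'' of $\tilde\alpha\bowtie\tilde g$. Theorem~\ref{4.5} as stated only says that \emph{some} square root produces a quasi-ribbon element; the fact that $u^{-1}v$ is necessarily grouplike for any quasi-ribbon $v$ is a separate (standard) lemma in Kauffman--Radford, and without it the bijection in Corollary~\ref{4.7} and hence your converse does not follow. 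Similarly, in part~(2) the claim that the $H^*$-generator condition is automatic from the $H$-condition via Radford's $S^4$ formula is correct in spirit but is itself a nontrivial reduction that Kauffman--Radford carry out explicitly; it is not an immediate consequence and should not be waved through.
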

\begin{corollary}\label{4.7}\cite[Proposition 3]{Kau} Let $\mathcal{S}$ be the antipode of $D(H)$ and set ${u}=\sum\mathcal{S}(\emph{y}_{i})\emph{x}_{i}$. Then
$$(\gamma,h)\mapsto {u}(\gamma^{-1}\otimes h^{-1})$$
defines a one-one correspondence between those pairs $(\gamma,h)\in G(H^{*})\times G(H)$ such that $h^{2}=\tilde{g}$ and $\gamma^{2}=\tilde{\alpha}$ and the quasi-ribbon elements of $(D(H),\mathcal{R}).$ The ribbon elements correspond to those pairs $(\gamma,h)$ which further satisfy $S^{2}(y)=h(\gamma\rightharpoonup y\leftharpoonup\gamma^{-1})h^{-1}$, for all $y\in H$.
\end{corollary}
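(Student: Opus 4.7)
The corollary is a specialization of Theorems~\ref{4.5} and~\ref{4.6} to the quantum double, so the plan is to reconcile the two descriptions of (quasi-)ribbon elements---via a square root $h_{\tilde{\alpha}}'$ of $h_{\tilde{\alpha}}$ in $D(H)$ on the one hand (Theorem~\ref{4.5}), and via a grouplike pair $(\gamma,h)$ with $\gamma^{2}=\tilde{\alpha}$ and $h^{2}=\tilde{g}$ on the other (Theorem~\ref{4.6})---into a single explicit bijection.

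First I would identify $h_{\tilde{\alpha}}$ as a concrete element of $D(H)$. Using the universal $\mathcal{R}$-matrix $\mathcal{R}=\sum_{i}(1\bowtie e_{i})\otimes(e^{i}\bowtie 1)$ of the double together with the standard description of the distinguished grouplikes $\tilde{G}\in G(D(H))$ and $\tilde{A}\in G(D(H)^{*})$ in terms of $\tilde{g}$ and $\tilde{\alpha}$ (due to Radford), a direct evaluation of $g_{\tilde{A}}=\sum_{i}(1\bowtie e_{i})\tilde{A}(e^{i}\bowtie 1)$ collapses via the dual-basis property and yields $h_{\tilde{A}}=\tilde{\alpha}\bowtie\tilde{g}$ in $D(H)$. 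Because the coalgebra structure of the bicrossed product factors through the tensor decomposition $H^{*cop}\otimes H$, the grouplikes of $D(H)$ are exactly $\{\gamma\bowtie h\mid\gamma\in G(H^{*}),\ h\in G(H)\}$, so a grouplike square root of $h_{\tilde{A}}$ is the same data as a pair $(\gamma,h)$ as in the statement.

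The second step applies Theorem~\ref{4.5} with $h_{\tilde{\alpha}}'=\gamma^{-1}\bowtie h^{-1}=\gamma^{-1}\otimes h^{-1}$ (inverted to match the convention $v^{2}=uS(u)=h_{\tilde{\alpha}}^{-1}$ for the double); the formula $v=u\,h_{\tilde{\alpha}}'$ then becomes precisely the map $(\gamma,h)\mapsto u(\gamma^{-1}\otimes h^{-1})$ of the corollary. Injectivity follows from the invertibility of $u$ together with uniqueness of the bicrossed factorization $\gamma\bowtie h$. For surjectivity I would write an arbitrary quasi-ribbon as $v=u\,w$; the quasi-ribbon identity $\Delta(v)=(\mathcal{R}^{op}\mathcal{R})^{-1}(v\otimes v)$ combined with the known identity $\Delta(u)=(\mathcal{R}^{op}\mathcal{R})^{-1}(u\otimes u)$ forces $w$ to be grouplike, and $v^{2}=uS(u)=h_{\tilde{\alpha}}^{-1}$ forces $w^{2}=h_{\tilde{A}}^{-1}$, hence $w=\gamma^{-1}\bowtie h^{-1}$ for a unique such pair. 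The ribbon refinement is then just Theorem~\ref{4.6}(2), whose centrality criterion is exactly $S^{2}(y)=h(\gamma\rightharpoonup y\leftharpoonup\gamma^{-1})h^{-1}$. The main obstacle is the first step: locating the distinguished grouplikes of $D(H)$ and $D(H)^{*}$ and executing the $\mathcal{R}$-matrix sum against $\tilde{A}$ to pin down $h_{\tilde{\alpha}}$ exactly; once $h_{\tilde{\alpha}}=\tilde{\alpha}\bowtie\tilde{g}$ is in hand, the rest is bookkeeping.
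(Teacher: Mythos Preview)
The paper does not prove Corollary~\ref{4.7}; it is quoted verbatim from \cite[Proposition~3]{Kau} as an external input, alongside Theorems~\ref{4.5} and~\ref{4.6}. So there is no ``paper's own proof'' to compare against. Your plan---specialize Theorem~\ref{4.5} to $D(H)$, identify $h_{\tilde\alpha}$ explicitly, and read off the grouplike square roots as pairs $(\gamma,h)$---is exactly the argument Kauffman--Radford give, and it is the right one.

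There is one concrete slip in your first step. For the double, $D(H)$ is unimodular, so the distinguished character of $D(H)^{*}$ is $\varepsilon_{D}$; evaluating $g_{\varepsilon_{D}}=\sum(1\bowtie e_{i})\,\varepsilon_{D}(e^{i}\bowtie 1)$ against the universal $\mathcal{R}$-matrix gives $g_{\varepsilon_{D}}=1$, not anything involving $\tilde\alpha$. The distinguished grouplike of $D(H)$ is $\tilde G=\tilde\alpha\bowtie\tilde g$ (Radford), and therefore
\[
h_{\tilde A}\;=\;g_{\varepsilon_{D}}\,\tilde G^{-1}\;=\;\tilde\alpha^{-1}\bowtie\tilde g^{-1},
\]
not $\tilde\alpha\bowtie\tilde g$ as you wrote. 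With this corrected value the grouplike square roots of $h_{\tilde A}$ are \emph{exactly} the elements $\gamma^{-1}\bowtie h^{-1}$ with $\gamma^{2}=\tilde\alpha$ and $h^{2}=\tilde g$, so the formula $v=u\,(\gamma^{-1}\bowtie h^{-1})$ drops out of Theorem~\ref{4.5} directly---your parenthetical about ``inverting to match the convention $v^{2}=uS(u)=h_{\tilde\alpha}^{-1}$'' is an attempt to patch this sign error and should be removed. The paper's own later computation in the proof of Theorem~\ref{4.22} carries out precisely this calculation of $g_{\varepsilon_{D}}=1$ and $h_{\varepsilon_{D}}=\hat g^{-1}$ in the special case $H=R_{mn}(q)$, which you can use as a template. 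After this fix the injectivity, surjectivity, and ribbon-refinement paragraphs of your plan go through as written.
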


By \cite{DE1}, $D(R_{mn}(q))$ is unimodular, so that ${\alpha_{D}}=\varepsilon_{D}$, where ${\alpha_{D}}$ and $\varepsilon_{D}$ are the distinguished grouplike element and the counit of $(D(R_{mn}(q)))^{*}$, respectively.

\begin{lemma}\label{4.16}

(1) $\sum_{i=0}^{mn-1}\xi^{(n-1)i}\alpha^{i}\beta^{n-1}$ is a right integral in $(R_{mn}(q))^{*}$, and the distinguished grouplike element of $R_{mn}(q)$ is $g^{1-n}$.

(2) $\sum_{i=0}^{mn-1} g^{i}x^{n-1}$ is a left integral in $R_{mn}(q)$, and the distinguished grouplike element of $(R_{mn}(q))^{*}$ is $\alpha^{-m}$.
\end{lemma}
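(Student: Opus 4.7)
The plan is to verify both claims by direct computation, testing the integral condition and the defining identity for the distinguished grouplike on algebra generators. Since the maps $f\mapsto Tf-\varepsilon(f)T$ and $f\mapsto fT-\langle f,\tilde g\rangle T$ are additive and behave multiplicatively in $f$, it suffices, by \leref{L3.2}, to check part (1) on the generators $\alpha,\beta$ of $(R_{mn}(q))^{*}$; the analogous reduction for part (2) tests everything on $g,x\in R_{mn}(q)$. The toolkit is \prref{2.15} on the dual side, together with the iterated commutation $\beta^{k}\alpha=\xi^{k}\alpha\beta^{k}$ (a one-line induction from $\beta\alpha=\xi\alpha\beta$), and the relations $g^{mn}=1$, $x^{n}=g^{n}-1$, $xg^{i}=q^{i}g^{i}x$ of $R_{mn}(q)$, plus the finite-order identities $\xi^{mn}=1$ and $q^{n}=1$.

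For part (1), set $T=\sum_{i=0}^{mn-1}\xi^{(n-1)i}\alpha^{i}\beta^{n-1}$. The identity $T\beta=0$ is immediate from $\beta^{n}=0$, and $T\alpha=T$ follows by pushing $\alpha$ across $\beta^{n-1}$ (which produces a factor $\xi^{n-1}$), then reindexing $i\mapsto i+1$ modulo $mn$ via $\alpha^{mn}=\varepsilon$ (which contributes a compensating factor $\xi^{-(n-1)}$); the two factors cancel. This shows $T$ is a right integral. To pin down $\tilde g$, I compute $\alpha T$ and $\beta T$: now the commutation factor is absent, so only the reindexing contributes and $\alpha T=\xi^{1-n}T$, while $\beta T=0$ via $\beta\alpha^{i}=\xi^{i}\alpha^{i}\beta$ and $\beta^{n}=0$. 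The equations $\langle\alpha,\tilde g\rangle=\xi^{1-n}$ and $\langle\beta,\tilde g\rangle=0$, together with $\alpha(g^{j})=\xi^{j}$, single out $\tilde g=g^{1-n}$ uniquely among the grouplikes $\{g^{j}\}$ of $R_{mn}(q)$.

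For part (2), set $t=\sum_{i=0}^{mn-1}g^{i}x^{n-1}$. That $t$ is a left integral reduces to $gt=t$ (immediate from $g^{mn}=1$) and $xt=0$: pulling $x$ across $g^{i}$ via $xg^{i}=q^{i}g^{i}x$ yields $xt=\sum_{i}q^{i}g^{i}(g^{n}-1)$, and reindexing the first half by $i\mapsto i-n$ together with $q^{n}=1$ shows both halves equal $\sum_{j}q^{j}g^{j}$, so they cancel. For $\tilde\alpha$, I compute $tg$ and $tx$: pushing $g$ across $x^{n-1}$ via $x^{n-1}g=q^{n-1}gx^{n-1}$ gives $tg=q^{n-1}t=q^{-1}t$, and $tx=\sum_{i}g^{i}(g^{n}-1)=0$ by the same shift argument. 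Every grouplike $\alpha^{k}\in G((R_{mn}(q))^{*})$ vanishes on the nilpotent generator $x$, so $\tilde\alpha$ is determined by $\langle\tilde\alpha,g\rangle=q^{-1}=\xi^{-m}$, forcing $\tilde\alpha=\alpha^{-m}$. The only delicate point throughout is the careful bookkeeping of the extra factors of $\xi$ (respectively $q$) generated when commuting the grouplike generator past the nilpotent one, and the matching shifts modulo $mn$ in the cyclic sums; there is no conceptual obstacle.
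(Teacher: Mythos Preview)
Your proof is correct and takes a genuinely different route from the paper's. The paper proceeds by \emph{derivation}: it starts with a generic element $\lambda=\sum a_{ij}\alpha^{i}\beta^{j}$ (respectively $\tilde g=\sum b_{kl}g^{k}x^{l}$), imposes the integral (respectively distinguished-grouplike) condition against each test element, and solves the resulting linear system via Vandermonde determinants and Cramer's rule. You instead proceed by \emph{verification}: with the candidate already in hand, you check the relevant identities on the algebra generators only, using the observation that both $f\mapsto Tf-\varepsilon(f)T$ and $f\mapsto fT-\langle f,\tilde g\rangle T$ vanish on products once they vanish on factors (the latter because $\tilde g$ is grouplike, so $\langle\,\cdot\,,\tilde g\rangle$ is multiplicative). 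Your argument is shorter and avoids all linear algebra; the paper's argument has the mild advantage of \emph{finding} the integral and the distinguished grouplike from scratch rather than guessing them, and it does not need to know the groups $G(R_{mn}(q))$ and $G((R_{mn}(q))^{*})$ in advance.

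One small caution on phrasing: the clause ``every grouplike $\alpha^{k}\in G((R_{mn}(q))^{*})$'' is imprecise, since by the coproduct formula for $\alpha$ the element $\alpha$ itself is \emph{not} grouplike when $m\ge 2$; only the powers $\alpha^{mj}$ are (these are exactly the algebra characters $g\mapsto q^{j}$, $x\mapsto 0$). This does not affect your logic---all you actually use is that the candidate $\alpha^{-m}$ is grouplike and satisfies $\langle\alpha^{-m},g\rangle=q^{-1}$ and $\langle\alpha^{-m},x\rangle=0$, which matches $tg=q^{-1}t$ and $tx=0$---but the statement is worth tightening.
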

\begin{proof} Let $\lambda=\sum_{\substack{0\leq i\leq mn-1\\0\leq j\leq n-1}} a_{ij}\alpha^{i}\beta^{j}.$ We use the definition of a right integral of $(R_{mn}(q))^{*}$:$$\lambda\in\int_{(R_{mn}(q))^{*}}^{r}\Leftrightarrow \lambda h^{*}=\varepsilon(h^{*})\lambda, $$for all $h^{*}\in(R_{mn}(q))^{*}.$
Let $h^{*}=\beta$, we have
$$\sum_{\substack{0\leq i\leq mn-1\\0\leq j\leq n-1}} a_{ij}\alpha^{i}\beta^{j+1}=0.$$
Since $\{\alpha^{i}\beta^{k}|0\leq i\leq mn-1, 0\leq j\leq n-1\}$ is {a} basis of $(R_{mn}(q))^{*}$, and $\beta^{n}=0$, we get
$$\lambda=\sum_{i=0}^{mn-1} a_{i,n-1}\alpha^{i}\beta^{n-1}.$$
Let $h^{*}=\alpha$, we have
$$\sum_{i=0}^{mn-1} a_{i,n-1}\xi^{n-1}\alpha^{i+1}\beta^{n-1}=\sum_{i=0}^{mn-1} a_{i,n-1}\alpha^{i}\beta^{n-1},$$
then $a_{i,n-1}\xi^{n-1}=a_{i+1,n-1}$.

Therefore, $$\lambda=\sum_{i=0}^{mn-1}\xi^{(n-1)i}\alpha^{i}\beta^{n-1}.$$
Let $\tilde{g}=\sum_{\substack{0\leq k\leq mn-1\\0\leq l\leq n-1}} b_{kl}g^{k}x^{l}$ be the distinguished grouplike element of  $R_{mn}(q)$. Then we have $$h^{*}\lambda=h^{*}(\tilde{g})\lambda.$$
Let $h^{*}=\alpha^{j}$, we have
$$\sum_{\substack{0\leq i\leq mn-1\\0\leq j\leq n-1}}\xi^{(n-1)i}\alpha^{i+j}\beta^{n-1}=\langle\sum_{\substack{0\leq i\leq mn-1\\0\leq j\leq n-1}}\xi^{ij}\overline{g^{i}},\sum_{\substack{0\leq k\leq mn-1\\0\leq l\leq n-1}} b_{kl}g^{k}x^{l}\rangle\sum_{i=0}^{mn-1}\xi^{(n-1)i}\alpha^{i}\beta^{n-1}.$$
Then $\sum_{k=0}^{mn-1}\xi^{kj}b_{k0}=\xi^{(1-n)j},\ 0\leq j\leq mn-1.$
By Cramer's Rule and Vandermonde determinant, we have
$$b_{k0}=
\begin{cases}
1,& \text{$k=mn+1-n,$}\\
0,& \text{otherwise.}
\end{cases},\ where\ 0\leq k\leq mn-1.$$
Let $h^{*}=\alpha^{j}\beta^{z}$, $0\leq j\leq mn-1,\ 1\leq z\leq n-1$, we have
$$\langle(z)!_{q}\sum_{k=0}^{mn-1}\xi^{jk}\overline{g^{k}x^{z}},\sum_{\substack{0\leq k\leq mn-1\\0\leq l\leq n-1}} b_{kl}g^{k}x^{l}\rangle\sum_{i=0}^{mn-1}\xi^{(n-1)i}\alpha^{i}\beta^{n-1}=0.$$
Since $\{\alpha^{i}\beta^{k}|0\leq i\leq mn-1, 0\leq j\leq n-1\}$ is {a} basis of $(R_{mn}(q))^{*},$ we have
$$\langle(z)!_{q}\sum_{k=0}^{mn-1}\xi^{jk}\overline{g^{k}x^{z}},\sum_{\substack{0\leq k\leq mn-1\\0\leq l\leq n-1}} b_{kl}g^{k}x^{l}\rangle=(z)!_{q}\sum_{k=0}^{mn-1}\xi^{jk}b_{kz}=0.$$
By Cramer's Rule and Vandermonde determinant, we have
$$b_{kl}=0,\ 0\leq k\leq mn-1,\ 1\leq l\leq n-1.$$
Therefore, $\tilde{g}=g^{1-n}.$
\end{proof}
Similarly, we can prove Part (2).

\begin{theorem}\label{4.17}
(1) $(D(R_{mn}(q)),\mathcal{R})$ has quasi-ribbon elements if and only if $n$ is odd.

(2) $(D(R_{mn}(q)),\mathcal{R})$ has {a} unique ribbon element if and only if both $m$ and $n$ are odd.

(3) $(D(R_{mn}(q)),\mathcal{R})$ has two ribbon elements if and only if $m$ is even {and} $n$ is odd.
\end{theorem}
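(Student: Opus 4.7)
The plan is to apply Corollary \ref{4.7} of Kauffman--Radford, which reduces the existence of (quasi-)ribbon elements of $(D(R_{mn}(q)),\mathcal{R})$ to finding pairs $(\gamma, h) \in G((R_{mn}(q))^{*}) \times G(R_{mn}(q))$ with $\gamma^{2} = \tilde{\alpha}$ and $h^{2} = \tilde{g}$. First I would pin down the two grouplike groups. Clearly $G(R_{mn}(q)) = \langle g \rangle \cong \mathbb{Z}/mn\mathbb{Z}$. For $G((R_{mn}(q))^{*})$, viewed as the set of unital algebra maps $\phi : R_{mn}(q) \to \Bbbk$, the relation $xg = qgx$ forces $(1 - q)\phi(g)\phi(x) = 0$, so $\phi(x) = 0$ (assuming $n \geq 2$, so that $q \neq 1$), and then $x^{n} = g^{n} - 1$ forces $\phi(g)^{n} = 1$. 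Thus $G((R_{mn}(q))^{*}) \cong \mathbb{Z}/n\mathbb{Z}$, generated by the algebra map $\pi$ with $\pi(g) = q$ and $\pi(x) = 0$. A short calculation using Lemma \ref{L3.1} yields $\alpha^{m} = \sum_{i} \xi^{mi} \overline{g^{i}} = \pi$, and combined with Lemma \ref{4.16} this gives $\tilde{g} = g^{1-n}$ and $\tilde{\alpha} = \alpha^{-m} = \pi^{-1}$.

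Writing $h = g^{j}$ and $\gamma = \pi^{k}$, the conditions $h^{2} = \tilde{g}$ and $\gamma^{2} = \tilde{\alpha}$ translate to
\begin{equation*}
2j \equiv 1 - n \pmod{mn}, \qquad 2k + 1 \equiv 0 \pmod{n}.
\end{equation*}
The second is solvable if and only if $n$ is odd, in which case $k \equiv (n-1)/2$ is the unique solution modulo $n$. When $n$ is odd, the first is also solvable: there is a unique $j$ modulo $mn$ if $m$ is odd, and there are exactly two solutions (differing by $mn/2$) if $m$ is even. By Corollary \ref{4.7} this proves part (1) and gives the correct counts of quasi-ribbon elements for parts (2) and (3).

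For the ribbon criterion $S^{2}(y) = h(\gamma \rightharpoonup y \leftharpoonup \gamma^{-1}) h^{-1}$, both sides are algebra automorphisms of $R_{mn}(q)$, so it suffices to check on the generators $g$ and $x$. One computes $S^{2}(g) = g$ and $S^{2}(x) = gxg^{-1} = q^{-1}x$. Using $\Delta(g) = g \otimes g$ and $\Delta(x) = x \otimes g + 1 \otimes x$, one finds $\gamma \rightharpoonup g \leftharpoonup \gamma^{-1} = g$ and $\gamma \rightharpoonup x \leftharpoonup \gamma^{-1} = \gamma(g)\, x = q^{k} x$; conjugation by $h = g^{j}$ via $gx = q^{-1}xg$ then yields $g$ and $q^{k-j} x$ respectively. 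The condition on $g$ holds automatically, while the condition on $x$ becomes $j - k \equiv 1 \pmod{n}$, which is forced by $2k \equiv -1 \pmod{n}$ together with $2j \equiv 1 - n \equiv 1 \pmod{n}$. Hence every quasi-ribbon element is actually a ribbon element, and the counts in parts (2) and (3) follow.

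The only genuinely subtle point is the identification $G((R_{mn}(q))^{*}) \cong \mathbb{Z}/n\mathbb{Z}$ (of order $n$, not $mn$) with $\tilde{\alpha} = \pi^{-1}$ of exact order $n$; once this is in place, the parity obstruction on $n$ and the splitting according to the parity of $m$ fall out of elementary arithmetic modulo $mn$, and the $S^{2}$ condition turns out to be automatic rather than restrictive.
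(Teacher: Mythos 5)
Your proof is correct and follows essentially the same route as the paper: reduce to Theorem~\ref{4.6}/Corollary~\ref{4.7}, feed in the distinguished grouplikes $\tilde{g}=g^{1-n}$ and $\tilde{\alpha}=\alpha^{-m}$ from Lemma~\ref{4.16}, and count solutions of the resulting congruences modulo $mn$ and $n$. Your write-up is in two respects tidier than the paper's: you explicitly justify that $G((R_{mn}(q))^{*})=\langle\alpha^{m}\rangle\cong\mathbb{Z}/n\mathbb{Z}$ (the paper simply writes $\gamma=(\alpha^{-m})^{k}$ without argument), and you observe directly that the $S^{2}$-condition $j-k\equiv 1\pmod n$ is forced by the two square-root congruences, which replaces the paper's parity analysis of the parameters $p,t,r$.
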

\begin{proof}
(1) By part(1) of Theorem \ref{4.6}, $(D(R_{mn}(q)),\mathcal{R})$ has a quasi-ribbon element if and only if there {exist} $h=g^{j}\in G(R_{mn}(q))$, $\gamma=(\alpha^{-m})^{k}\in G(R_{mn}(q)^{*}),\ where\ j,\ k\in\mathbb{Z}$, such that $(g^{j})^{2}=g^{1-n},$ $(\alpha^{-m})^{2k}=\alpha^{-m}$, which implies $mn\mid {2j+n-1}$ and $n\mid {2k-1}.$ Since $2k-1$ is odd, $n$ must be odd. Furthermore, $2j+n-1$ is even. So no matter {whether} $m$ is even or odd, there exists $j=\frac{1-n}{2}\in\mathbb{Z}$ such that $mn\mid {2j+n-1}$. Thus, one knows that $(D(R_{mn}(q)),\mathcal{R})$ has quasi-ribbon elements if and only if $n$ is odd.

(2)
By part(2) of Theorem \ref{4.6}, $(D(R_{mn}(q)),\mathcal{R})$ has ribbon elements if and only if there {exist} $\gamma=(\alpha^{-m})^{k}\in G(R_{mn}(q)^{*})$, $h=g^{j}\in G(R_{mn}(q))$ which satisfy
\begin{equation}\label{gs4.4}
\begin{split}
&h^{2}=g^{1-n},\   \gamma^{2}=\alpha^{-m},\\
&S^{2}(x)=h(\gamma\rightharpoonup x\leftharpoonup\gamma^{-1})h^{-1},\\&S^{2}(g)=h(\gamma\rightharpoonup g\leftharpoonup\gamma^{-1})h^{-1},
\end{split}
\end{equation}
where $x$ and $g$ are the generators of $R_{mn}(q)$, $k,j\in \mathbb{Z}$ and $\gamma,h$ are given in part$(1)$.

It follows {from} $S^{2}(x)=q^{-1}x$, $h(\gamma\rightharpoonup x\leftharpoonup\gamma^{-1})h^{-1}=g^{j}(q^{-k}x)g^{-j}=q^{-1}x$, $S^{2}(g)=g$, $h(\gamma\rightharpoonup g\leftharpoonup\gamma^{-1})h^{-1}=g^{j}gg^{-j}=g$ that $(D(R_{mn}(q)),\mathcal{R})$ has ribbon elements if and only if there exists pairs $(\gamma,h)=((\alpha^{-m})^{k},g^{j})$ such that
\begin{equation*}
\begin{split}
&g^{2j}=g^{1-n},\   \alpha^{-2mk}=\alpha^{-m},\\
&q^{-1}x=q^{-k-j}x.
\end{split}
\end{equation*}
Since the order of $\alpha$ is $mn$ and the order of $g$ is $mn$, $(D(R_{mn}(q)),\mathcal{R})$ has ribbon elements if and only if there exists pairs $(\gamma,h)=((\alpha^{-m})^{k},g^{j})$ such that
$$mn\mid{2j+n-1},n\mid {2k-1},n\mid {k+j-1}.$$

$\Leftrightarrow$\begin{equation}\label{gs4.5}
j=\frac{mnp+1-n}{2}\in\mathbb{Z},k=\frac{nt+1}{2}\in\mathbb{Z},mp+t=2r+1,
\end{equation}
where $p,t,r\in \mathbb{Z}.$

If $t$ is odd, then $(\alpha^{-m})^{k}=(\alpha^{-m})^{\frac{nt+1}{2}}=(\alpha^{-m})^{\frac{n+1}{2}}$; if $t$ is even, $k=\frac{nt+1}{2}\notin \mathbb{Z}$, which implies that $t$ must be odd. If $p$ is odd, then $g^{j}=g^{\frac{mnp+1-n}{2}}=g^{\frac{mn+1-n}{2}}$; if $p$ is even, then $g^{j}=g^{\frac{mnp+1-n}{2}}=g^{\frac{1-n}{2}}$. By part$(1)$, one knows that if $(D(R_{mn}(q)),\mathcal{R})$ has ribbon elements, then $n$ must be odd. If both $n$ and $m$ are odd, $t$ must be odd since $k=\frac{nt+1}{2}\in\mathbb{Z}$. Moreover, $j=\frac{mnp+1-n}{2}\in\mathbb{Z}$ and $mp+t=2r+1$ imply that $p$ is even, then there exists unique pair $(\gamma,h)=((\alpha^{-m})^{\frac{n+1}{2}},g^{\frac{1-n}{2}})$ satisfying ($\ref{gs4.4}$). If $n$ is odd and $m$ is even, $t$ must be odd since $k=\frac{nt+1}{2}\in\mathbb{Z}$. In this case, no matter {whether} $p$ is odd or even, $j=\frac{mnp+1-n}{2}\in\mathbb{Z}$ and $mp+t=2r+1$. Thus, there exists two pairs $(\gamma_{1},h_{1})=((\alpha^{-m})^{\frac{n+1}{2}},g^{\frac{1-n}{2}})$ and $(\gamma_{2},h_{2})=((\alpha^{-m})^{\frac{n+1}{2}},g^{\frac{(m-1)n+1}{2}})$ satisfying ($\ref{gs4.4}$). Consequently, by Corollary \ref{4.7}, $(D(R_{mn}(q)),\mathcal{R})$ has unique ribbon element if and only if both $m$ and $n$ are odd; $(D(R_{mn}(q)),\mathcal{R})$ has two ribbon elements if and only if $m$ is even, $n$ is odd.
\end{proof}

\subsection{Computation of the ribbon elements of $D(R_{mn}(q))$}\selabel{4.2}
~~

Throughout this subsection, assume {that} $n$ is an odd integer. Notice that $\tilde{\alpha}=\alpha^{-m}$ and $\tilde{g}=g^{1-n}$ are the distinguished grouplike elements in $(R_{mn}(q))^{*}$ and $R_{mn}(q)$, respectively. By the description about the distinguished grouplike element of Drinfeld double of a finite-dimensional quasi-triangular Hopf algebra in \cite{DE1}, the distinguished grouplike element in $D(R_{mn}(q))$ is $\tilde{\alpha}\bowtie \tilde{g}$.

Recall the universal R-matrix of $D(R_{mn}(q))$ given in subsection 4.1:
\begin{equation}\label{4.2}
\begin{split}
\mathcal{R}&=\frac{1}{mn}\sum_{i,j,k}\frac{1}{(j)!_{q}}\xi^{-ik}(1\bowtie g^{i}x^{j})\otimes(\alpha^{k}\beta^{j}\bowtie 1).\\
\end{split}
\end{equation}

\begin{theorem}\label{4.22}

(1) When $m$ is odd, the unique ribbon element in $D(R_{mn}(q))$ is
$$v=u(\alpha^{\frac{m(n+1)}{2}}\bowtie g^{\frac{n-1}{2}}),$$
$$where\ u=\frac{1}{mn}\sum_{\substack{0\leq i,k\leq mn-1\\0\leq j\leq n-1}}(-1)^{j}\frac{1}{(j)!_{q}}\xi^{-(i+j)k-\frac{j(j-1)m}{2}}(\alpha^{-mj-k}\beta^{j}\bowtie g^{i}x^{j}).$$
(2) When $m$ is even, the ribbon elements in $D(R_{mn}(q))$ are
$$v_{1}=u(\alpha^{\frac{m(n+1)}{2}}\bowtie g^{\frac{n-1}{2}}),\ v_{2}=u(\alpha^{\frac{m(n+1)}{2}}\bowtie g^{\frac{n(m+1)-1}{2}}),$$
$$where\ u=\frac{1}{mn}\sum_{\substack{0\leq i,k\leq mn-1\\0\leq j\leq n-1}}(-1)^{j}\frac{1}{(j)!_{q}}\xi^{-(i+j)k-\frac{j(j-1)m}{2}}(\alpha^{-mj-k}\beta^{j}\bowtie g^{i}x^{j}).$$
\end{theorem}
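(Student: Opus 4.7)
The plan is to apply Corollary~\ref{4.7}, which says every ribbon element of $(D(R_{mn}(q)),\mathcal{R})$ has the form $v = u(\gamma^{-1}\bowtie h^{-1})$ with $u=\sum_i\mathcal{S}(y_i)x_i$ built from the universal $\mathcal{R}$-matrix. Two ingredients are needed: the explicit pairs $(\gamma,h)$ identified in the proof of Theorem~\ref{4.17}, and a closed expression for $u$, which reduces to applying the antipode $\mathcal{S}$ of $D(R_{mn}(q))$ to the second leg of $\mathcal{R}$ from subsection~\ref{s4.1}.

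First I would read off the inverse pairs from the proof of Theorem~\ref{4.17}. In the odd-odd case the unique pair $(\gamma,h)=((\alpha^{-m})^{(n+1)/2}, g^{(1-n)/2})$ inverts to $\gamma^{-1}=\alpha^{m(n+1)/2}$ and $h^{-1}=g^{(n-1)/2}$. In the even-odd case there is a second pair $(\gamma_2,h_2)=((\alpha^{-m})^{(n+1)/2}, g^{((m-1)n+1)/2})$, whose inverses are $\gamma_2^{-1}=\alpha^{m(n+1)/2}$ and $h_2^{-1}=g^{-((m-1)n+1)/2}\equiv g^{(n(m+1)-1)/2}\pmod{mn}$, producing the two elements $v_1$ and $v_2$.

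The main work is computing $u$. Because the second tensor leg of $\mathcal{R}$ has the form $\alpha^k\beta^j\bowtie 1$, the multiplication in $D(R_{mn}(q))$ satisfies $(f\bowtie 1)(1\bowtie a)=f\bowtie a$, and the antipode formula in Proposition~\ref{3.6} collapses to $\mathcal{S}(f\bowtie 1)=S(f)\bowtie 1$, it suffices to compute $S(\alpha^k\beta^j)$ in $((R_{mn}(q))^{*})^{cop}$. Using $S(\alpha)=\alpha^{-1}$ and $S(\beta)=-\alpha^{-m}\beta$ from Proposition~\ref{2.12} together with the anti-multiplicativity of $S$, I obtain $S(\alpha^k\beta^j)=(-\alpha^{-m}\beta)^j\alpha^{-k}$. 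Iterating the commutation relations $\beta\alpha=\xi\alpha\beta$ and $\alpha^{-m}\beta=q\beta\alpha^{-m}$ yields $(\alpha^{-m}\beta)^j=q^{-j(j-1)/2}\alpha^{-jm}\beta^j$ and $\beta^j\alpha^{-k}=\xi^{-jk}\alpha^{-k}\beta^j$, so
$$S(\alpha^k\beta^j)=(-1)^j\,\xi^{-\frac{mj(j-1)}{2}-jk}\,\alpha^{-jm-k}\beta^j.$$
Substituting this into $u=\sum_{i,j,k}\frac{\xi^{-ik}}{mn(j)!_q}\mathcal{S}(\alpha^k\beta^j\bowtie 1)\cdot(1\bowtie g^ix^j)$ and combining the $\xi$-exponents produces exactly the closed form displayed in the theorem.

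The main obstacle is the bookkeeping with $q$- and $\xi$-powers: each time $\alpha^a$ is moved past $\beta^b$ one picks up a $\xi$-factor, and these must combine with the $\xi^{-ik}$ already present in $\mathcal{R}$ to yield precisely the exponent $-(i+j)k-mj(j-1)/2$. Once $u$ has this explicit form, the ribbon elements in both parts of the theorem are simply the products $u\cdot(\alpha^{m(n+1)/2}\bowtie g^{(n-1)/2})$ and, in the even-odd case additionally $u\cdot(\alpha^{m(n+1)/2}\bowtie g^{(n(m+1)-1)/2})$, so the statement follows directly from Corollary~\ref{4.7}.
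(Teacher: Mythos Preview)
Your proposal is correct and follows essentially the same route as the paper: compute $u$ by applying the antipode to the second tensor leg of $\mathcal{R}$ and simplify using the commutation relation $\beta\alpha=\xi\alpha\beta$, then multiply by the appropriate grouplike element. Your computation of $S(\alpha^k\beta^j)$ and the resulting expression for $u$ agree line for line with the paper's.

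The only difference is one of framing. The paper works through Theorem~\ref{4.5}: it first shows $g_{\varepsilon_D}=1$ (using unimodularity of the double and the vanishing of the $j\neq 0$ terms), then computes $h_{\varepsilon_D}=\alpha^m\bowtie g^{n-1}$, enumerates its square roots, and in the even case checks directly which of the four square roots satisfy the conjugation condition $S^2(a)=(h')^{-1}ah'$ on the generators. You instead invoke Corollary~\ref{4.7} and simply read off the admissible pairs $(\gamma,h)$ already classified in the proof of Theorem~\ref{4.17}, then invert them. Your path is a little shorter, since Theorem~\ref{4.17} has already done the $S^2$-verification and singled out the correct pairs, so you avoid recomputing $g_{\varepsilon_D}$ and rechecking the square roots; the paper's version is more self-contained but mildly redundant with Theorem~\ref{4.17}. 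Either way the substance is the same.
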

\begin{proof} (1) We adopt the previous conventions and set $g_{\alpha_{D}}=g_{\varepsilon_{D}},$ (which holds as $D(R_{mn}(q))$ is unimodular). By (\ref{4.1}) and (\ref{4.2}), we have
\begin{equation*}
\begin{split}
g_{\varepsilon_{D}}&=\frac{1}{mn}\sum_{\substack{0\leq i,k\leq mn-1\\0\leq j\leq n-1}}\frac{1}{(j)!_{q}}\xi^{-ik}\varepsilon(\alpha^{k}\beta^{j}\bowtie1)(1\bowtie g^{i}x^{j})\\
&=\frac{1}{mn}\sum_{\substack{0\leq i,k\leq mn-1\\0\leq j\leq n-1}}\frac{1}{(j)!_{q}}\xi^{-ik}\varepsilon(1)\alpha^{k}\beta^{j}(1)(1\bowtie g^{i}x^{j}).\\
\end{split}
\end{equation*}
Since $\beta^{j}(1)=0$ when $j\neq0$ and $\varepsilon_{D}$ is an algebra homomorphism, only the terms with $j=0$ survive, and therefore
\begin{equation*}
\begin{split}
g_{\varepsilon_{D}}&=\frac{1}{mn}\sum_{\substack{0\leq i,k\leq mn-1\\0\leq j\leq n-1}}\xi^{-ik}(1\bowtie g^{i})\\
&=\frac{1}{mn}\sum_{i=0}^{mn-1}(\sum_{k=0}^{mn-1}\xi^{-ik})(1\bowtie g^{i}).\\
\end{split}
\end{equation*}

Observe that
$$\sum_{k=0}^{mn-1}\xi^{-ik}=\frac{1-(\xi^{-i})^{mn}}{1-\xi^{-i}}=0$$
unless $i=0$, in which case $\sum_{k=0}^{mn-1}\xi^{-ik}=mn$. Therefore $g_{\varepsilon_{D}}=1_{D(R_{mn}(q))}.$

By the discussion above, the distinguished grouplike element in $D(R_{mn}(q))\simeq (R_{mn}(q))^{*}\otimes R_{mn}(q)$ is $\hat{g}=\alpha^{-m}\bowtie g^{1-n}$.

By (\ref{4.1}), $h_{\varepsilon_{D}}=g_{\varepsilon_{D}}(\hat{g})^{-1}=(\alpha^{-m}\bowtie g^{1-n})^{-1}=\alpha^{m}\bowtie g^{n-1}.$ When $m$ and $n$ are both odd, the square root $h_{\varepsilon_{D}}^{'}$ of $h_{\varepsilon_{D}}$ is unique, because $h_{\varepsilon_{D}}$, and therefore $h_{\varepsilon_{D}}^{'}$, has odd order. Thus,
$$h_{\varepsilon_{D}}^{'}=\alpha^{\frac{m(n+1)}{2}}\bowtie g^{\frac{n-1}{2}},$$
$$v=uh_{\varepsilon_{D}}^{'}=u(\alpha^{\frac{m(n+1)}{2}}\bowtie g^{\frac{n-1}{2}}).$$
By Theorem \ref{4.5}, the quasi-ribbon element $v$ is the unique ribbon element of $D(R_{mn}(q))$.

(2) When $m$ is even and $n$ is odd, $h_{\varepsilon_{D}}$ has four square roots
$$h_{\varepsilon_{D1}}^{'}=\alpha^{\frac{m(n+1)}{2}}\bowtie g^{\frac{n-1}{2}},\ h_{\varepsilon_{D2}}^{'}=\alpha^{\frac{m}{2}}\bowtie g^{\frac{n-1}{2}},$$
$$h_{\varepsilon_{D3}}^{'}=\alpha^{\frac{m(n+1)}{2}}\bowtie g^{\frac{n(m+1)-1}{2}},\ h_{\varepsilon_{D4}}^{'}=\alpha^{\frac{m}{2}}\bowtie g^{\frac{n(m+1)-1}{2}}.$$
By Theorem \ref{4.5}, $$S^{2}(\varepsilon\bowtie g)=(h_{\varepsilon_{Di}}^{'})^{-1}(\varepsilon\bowtie g)h_{\varepsilon_{Di}}^{'},\ S^{2}(\varepsilon\bowtie x)=(h_{\varepsilon_{Di}}^{'})^{-1}(\varepsilon\bowtie x)h_{\varepsilon_{Di}}^{'},$$ $$S^{2}(\alpha\bowtie1)=(h_{\varepsilon_{Di}}^{'})^{-1}(\alpha\bowtie1)h_{\varepsilon_{Di}}^{'},\ S^{2}(\beta\bowtie1)=(h_{\varepsilon_{Di}}^{'})^{-1}(\beta\bowtie1)h_{\varepsilon_{Di}}^{'},$$ where $i=1,3$ and $\varepsilon\bowtie g,\varepsilon\bowtie x,\alpha\bowtie1,\beta\bowtie1$ are the generators of $D(R_{mn}(q))$.

Therefore, quasi-ribbon elements $$v_{1}=uh_{\varepsilon_{D1}}^{'}=u(\alpha^{\frac{m(n+1)}{2}}\bowtie g^{\frac{n-1}{2}})\ and\  v_{2}=uh_{\varepsilon_{D3}}^{'}=u(\alpha^{\frac{m(n+1)}{2}}\bowtie g^{\frac{n(m+1)-1}{2}})$$ are the ribbon elements of $D(R_{mn}(q))$.

It remains to show that $u$ has the expression in (\ref{2}). Recall that $u=\sum_{i}S(y_{i})x_{i}$, where $$\mathcal{R}=\frac{1}{mn}\sum_{i,j,k}\frac{1}{(j)!_{q}}\xi^{-ik}(1\bowtie g^{i}x^{j})\otimes(\alpha^{k}\beta^{j}\bowtie 1).$$ Therefore,
\begin{equation*}
\begin{split}
u&=\frac{1}{mn}\sum_{\substack{0\leq i,k\leq mn-1\\0\leq j\leq n-1}}\frac{1}{(j)!_{q}}\xi^{-ik}S(\alpha^{k}\beta^{j}\bowtie 1)(1\bowtie g^{i}x^{j})\\
&=\frac{1}{mn}\sum_{\substack{0\leq i,k\leq mn-1\\0\leq j\leq n-1}}\frac{1}{(j)!_{q}}\xi^{-ik}((-\alpha^{-m}\beta)^{j}(\alpha^{mn-1})^{k})\bowtie1)(1\bowtie g^{i}x^{j})\\
&=\frac{1}{mn}\sum_{\substack{0\leq i,k\leq mn-1\\0\leq j\leq n-1}}(-1)^{j}\frac{1}{(j)!_{q}}\xi^{-ik-jk-\frac{j(j-1)m}{2}}(\alpha^{-mj-k}\beta^{j}\bowtie1)(1\bowtie g^{i}x^{j})\\
&=\frac{1}{mn}\sum_{\substack{0\leq i,k\leq mn-1\\0\leq j\leq n-1}}(-1)^{j}\frac{1}{(j)!_{q}}\xi^{-(i+j)k-\frac{j(j-1)m}{2}}(\alpha^{-mj-k}\beta^{j}\bowtie g^{i}x^{j}).
\end{split}
\end{equation*}
\end{proof}

\end{document}